\numberwithin{equation}{section}
\newcommand{\C}{{\mathcal C}}
\newcommand{\Cu}{\mathrm{Map}_{{\C}}}
\newcommand{\D}{{\mathcal D}}
\newcommand{\Du}{\mathrm{Map}_{{\D}}}
\newcommand{\E}{{\mathcal E}}
\newcommand{\Eu}{\mathrm{Map}_{{\E}}}
\newcommand{\CD}{\C^{\D}}
\newcommand{\CDu}{\mathrm{Map}_{\CD}}
\newcommand{\F}{{\mathcal F}}
\newcommand{\U}{{\mathcal U}}
\newcommand{\Uu}{\mathrm{Map}_{{\U}}}
\newcommand{\UG}{{{\mathcal U}^G}}
\newcommand{\UD}{{{\mathcal U}^{\mathcal D}}}
\newcommand{\UGu}{\mathrm{Map}_{{\mathcal U}^G}}
\newcommand{\OF}{{\mathcal O_{\mathcal F}}}
\newcommand{\OFop}{{{\mathcal O}_{\mathcal F}^\mathrm{op}}}
\newcommand{\OFopu}{\mathrm{Map}_{{\mathcal O}_{\mathcal F}^\mathrm{op}}}
\newcommand{\COFop}{{\C^{{\mathcal O}_{\mathcal F}^\mathrm{op}}}}
\newcommand{\id}{\mathrm{id}}
\newcommand{\op}{\mathrm{op}}
\newcommand{\ev}{\mathrm{ev}}
\DeclareMathOperator{\colim}{colim}
\newcommand{\Set}{\textbf{Set}}
\newcommand{\sSet}{\textbf{sSet}}
\newcommand{\Gr}{\textbf{Gr}}
\newcommand{\sGr}{\textbf{sGr}}
\newcommand{\Cat}{\textbf{Cat}}
\newcommand{\Pos}{\textbf{Pos}}
\newcommand{\RMod}{{_R}\textbf{Mod}}
\newcommand{\Fi}{F_\iota}
\newcommand{\Ui}{U_\iota}
\newcommand{\Ho}{\mathrm{Ho}}
\newcommand{\Map}{\mathrm{Map}}
\newcommand{\Path}{\mathrm{Path}}
\newcommand{\Sk}{\mathrm{Sk}}
\newcommand{\Ch}{\mathrm{Ch}}
\newtheoremstyle{slanted} % name
{} % Space above
{} % Space below
{\slshape} % Body font
{} % Indent amount 1
{\bfseries} % Theorem head font
{.} % Punctuation after theorem head
{.5em} % Space after theorem head 2
{} %Theorem head spec
\theoremstyle{slanted}
\newtheorem*{rep@theorem}{\rep@title}
\newcommand{\newreptheorem}[2]{%
\newenvironment{rep#1}[1]{%
 \def\rep@title{#2 \ref{##1}}%
 \begin{rep@theorem}}%
 {\end{rep@theorem}}}
\theoremstyle{definition}
\newtheorem{defn}{Definition}[section]
\newtheorem{rem}[defn]{Remark}
\newtheorem{ex}[defn]{Example}
\theoremstyle{slanted}
\newtheorem{cor}[defn]{Corollary}
\newtheorem{lem}[defn]{Lemma}
\newtheorem{thm}[defn]{Theorem}
\newtheorem{prop}[defn]{Proposition}
\begin{document}
\title{On equivariant homotopy theory for model categories}
\author{Marc Stephan}
\subjclass[2010]{Primary 55P91; Secondary 18G55, 20J99}
% 55P91 Equivariant homotopy theory, 18G55 Homotopical algebra, (20Jxx Connections with homological algebra and category theory, 20J99 None of the above, but in this section)

\address{Department of Mathematics, The University of Chicago, Chicago, IL 60637} 
\email{mstephan@math.uchicago.edu}
\keywords{Equivariant homotopy theory, model categories, orbit category}
\date{\today}

\begin{abstract}
We introduce and compare two approaches to equivariant homotopy theory in a topological or ordinary Quillen model category.  For the topological model category of spaces, we generalize Piacenza's result that the categories of topological presheaves indexed by the orbit category of a fixed topological group $G$ and the category of $G$-spaces can be endowed with Quillen equivalent model category structures.  We prove an analogous result for any cofibrantly generated model category and discrete group 
$G$, under certain conditions on the fixed point functors of the subgroups of $G$.  These conditions hold in many examples, though not in the category of chain complexes, where we nevertheless establish and generalize to collections an equivariant Whitehead Theorem \`{a} la Kropholler and Wall for the normalized chain complexes of simplicial $G$-sets.
\end{abstract}

\maketitle{}

\section{Introduction}
\label{sec:Introduction}
For any topological group $G$, Piacenza (\cite[VI.~ \S6]{mayequi}, \cite{piacenza}) showed that the category of $G$-spaces and the category of orbit diagrams form Quillen equivalent model categories. This works more generally for any collection $\mathcal F$ of closed subgroups of $G$ that contains the trivial subgroup. Explicitly, the category of continuous contravariant diagrams of spaces indexed by the full subcategory $\OF$ of the orbit category with orbit spaces $G/H$ for $H\in \mathcal F$ equipped with the projective model structure is Quillen equivalent to the category of $G$-spaces with the $\mathcal F$-model structure, where the weak equivalences and fibrations are the maps that are taken to weak equivalences and fibrations by each $H$-fixed point functor for $H\in \mathcal F$.

Replacing the category of spaces with an arbitrary topological model category $\C$ or working with a discrete group $G$ and any model category $\C$, we explore the following questions. 
\begin{enumerate}
\item
Does the category $\C^G$ of $G$-objects in $\C$ admit the $\F$-model structure?
\item
Does the category of orbit diagrams $\COFop$ admit the projective model structure?
\item
If so, are $\C^G$ and $\COFop$ Quillen equivalent model categories?
\end{enumerate}

\subsection*{Results in the discrete setting}
Our main result in the discrete setting (Theorem \ref{mainthm1}) provides a positive answer to these three questions for cofibrantly generated $\C$ when the $H$-fixed point functors satisfy the cellularity conditions stated in Proposition \ref{propFmodelstructure}. The following examples are contained in \S\ref{sec:positivieexamples}.
\begin{ex}
The cellularity conditions hold and a positive answer to the three aforementioned questions is obtained if $\C$ is 
\begin{enumerate}
\item
the category of simplicial sets with the Quillen model structure, cubical sets or any other presheaf category with a cofibrantly generated model structure such that the generating cofibrations are monomorphisms,
\item 
the category of simplicial groups,
\item
the category of small categories with the Thomason model structure by work of Bohmann et aliae \cite{bohmannetal},
\item
the category of posets with Raptis's model structure \cite{raptis} by joint work with May and Zakharevich \cite{maystephanzakharevich},
\item
any cofibrantly generated left Bousfield localization of the above examples or any category of diagrams in the above examples with the projective model structure,
\item
any of the following models for $(\infty,1)$-categories, the model of complete Segal spaces, quasi-categories, simplicial categories or the two models of Segal categories by work of Bergner \cite{bergnerequi}.\footnote{Her results can be generalized from finite to discrete groups using \cite[2.3]{bohmannetal} for the model of simplicial categories, and that taking fixed points of $G
$-sets preserves directed colimits of diagrams where each arrow is a monomorphism for the models of Segal categories.}
\end{enumerate}
\end{ex}

In all the examples above, the right Quillen equivalence $\C^G\to \COFop$ takes a $G$-object to its fixed point diagram. If any of the first two questions above admits a positive answer for a model category $\C$ and discrete group $G$, then the answer to the corresponding question for the category $\C_*$ of pointed objects in $\C$ is positive as well, and if the fixed point diagram functor is a Quillen equivalence for $\C$, then the fixed point diagram functor for $\C_*$ is a Quillen equivalence as well (Lemma \ref{pointed}).  

In \S\ref{sec:chaincomplexes}, we provide an example, where the fixed point functors fail to satisfy the cellularity conditions and the fixed point diagram functor is not a Quillen equivalence.
\begin{ex}
If $\C$ is the category of non-negatively graded chain complexes with the projective model structure, then the model structures on the category of orbit diagrams $\C^\OFop$ and the  category of $G$-objects $\C^G$ exist, but the fixed point diagram functor is not a Quillen equivalence in general.
\end{ex}

One motivation for establishing $\mathcal F$-model structures for collections of subgroups $\mathcal F$ is to obtain  equivariant Whitehead Theorems for collections \cite[1.6]{lueck}. With an equivariant Whitehead Theorem we mean loosely that a map $f\colon X\to Y$ between $G$-objects is a $G$-homotopy equivalence provided that the map $f^H$ on $H$-fixed point objects is a weak equivalence for all subgroups $H$ of $G$. In the form for collections, we only need to check that $f^H$ is a weak equivalence for those subgroups $H$ that appear as isotropy groups of $X$ and $Y$.

Kropholler and Wall proved an equivariant Whitehead Theorem \cite[1.2]{krophollerwall} on chain complexes with several applications. It states that for any discrete group $G$ and ring $R$, an equivariant map $f\colon X\to Y$ between $G$-CW complexes induces an equivariant homotopy equivalence $C(f;R)$ on augmented cellular chain complexes with coefficients in $R$ provided that $C(f^H;R)\colon C(X^H;R)\to C(Y^H;R)$ is a homotopy equivalence for all subgroups $H$ of $G$. Kropholler and Wall's proof also works for the unaugmented cellular chain complexes and then implies a simplicial set version in which CW complexes are replaced by simplicial sets and the cellular chain complexes by normalized chain complexes. We provide a conceptual proof of this simplicial set version of the equivariant Whitehead Theorem and generalize it to collections.
\begin{repthm}{whiteheadchain}
Let $R$ be a ring and $\mathcal F$ a collection of subgroups of $G$. Let $f\colon X\to Y$ be a map between simplicial $G$-sets such that all isotropy groups of simplices of $X$ and $Y$ belong to $\mathcal F$. The map $C(f;R)\colon C(X;R)\to C(Y;R)$ between normalized chain complexes is an equivariant homotopy equivalence if \ref{whiteheadchaina} the induced map $C(X;R)^H\to C(Y;R)^H$ is a quasi-isomorphism for all $H\in\mathcal F$, or \ref{whiteheadchainb} the induced map $C(X^H;R)\to C(Y^H;R)$ is a quasi-isomorphism for all $H\in \mathcal F$.
\end{repthm}

\subsection*{Results in the topological setting}
For topological model categories $\C$, we prove the analogue of Theorem \ref{mainthm1} for compact Lie groups $G$.

\begin{repthm}{mainthm2}
Suppose that $G$ is a compact Lie group and that $\C$ is cofibrantly generated. Let $\mathcal F$ be a collection of closed subgroups of $G$ containing the trivial subgroup such that for any $H\in \F$, the $H$-fixed point functor satisfies the cellularity conditions \ref{cellular}. Then there is a Quillen equivalence $\COFop \leftrightarrows \C^G$
between the category of contravariant orbit diagrams with the projective model structure and of $G$-objects with the $\F$-model structure.
\end{repthm}
The cellularity conditions are often not required to establish the model structures. We show that if $G$ is a compact Lie group and $\C$ is cofibrantly generated, then $\COFop$ admits the projective model structure. If in addition every object of $\C$ is fibrant and the cofibrations are monomorphisms, then the category of $G$-objects $\C^G$ admits the $\F$-model structure (Proposition~\ref{Fmodeltop}~\ref{Fmodel_assumption2}).

For arbitrary topological groups $G$ and cofibrantly generated topological model categories $\C$ such that the cellularity conditions hold, some extra work is required to establish desired cofibrantly generated model structures on $\COFop$ and $\C^G$, but the cellularity conditions ensure that the model structures will be Quillen equivalent again. As an example, we extend the motivating result that for any topological group $G$, the categories of orbit diagrams of spaces and of $G$-spaces are Quillen equivalent, from spaces to diagrams of spaces with the projective model structure.

\subsection*{Model category theoretic techniques}
The model structures are obtained by transfer along not just one left adjoint but along a set of adjoints (Theorem \ref{transport}), as outlined in Appendix \ref{sec:transfer}. There, we summarize also the terminology about cofibrantly generated model categories (\cite{hovey}, \cite{hirschhorn}) and prove a general version of Quillen's path object argument (Lemma~\ref{lemtransport2}).

\subsection*{Context and related work}
Families or collections of subgroups are used in formulating isomorphism conjectures for $K$- and $L$-theory \cite{lueck} and the study of subgroup complexes \cite{quillensubgroup}. 

A different approach to equivariant homotopy theories was developed by Dwyer and Kan \cite{dwyerkansingular}. 

Kropholler and Wall's proof of their theorem \cite[1.2]{krophollerwall} is algebraic and works more generally \cite[1.3]{krophollerwall} for any equivariant chain map $f$ between augmented chain complexes of based permutation modules such that for every subgroup $H$ of $G$, the differentials and $f$ restrict to maps between the free modules on the $H$-fixed points of the bases. Their proof can be modified to generalize this algebraic version also to collections. In \cite{hambletonyalcin}, Hambleton and Yal{\c{c}}{\i}n gave a different, homological algebra proof of the algebraic version of the equivariant Whitehead Theorem via the orbit category.

The main results in this paper were proved in the author's master's thesis \cite{elmendorfstheorem} from 2010. May and Guillou have independently investigated equivariant homotopy theory for more general enrichments as well \cite{guilloumay1}, \cite{guilloumay2}.

\subsection*{Acknowledgments}
I am grateful to Jesper Grodal for supervising my master's thesis written as an exchange student at the University of Copenhagen and to Karin Baur, my supervisor at ETH Zurich. I would like to thank Bill Dwyer for a helpful discussion, Peter May for historical comments, Ang\'{e}lica Osorno for encouraging me to publish the obtained results and sharing the draft \cite{bohmannetal} with me, which led to improvements of the examples, Julie Bergner for discussions about her work \cite{bergnerequi}, J\'{e}r\^{o}me Scherer and Kathryn Hess for their careful reading of this paper, and the referee whose comments led to improvements of the exposition.

\section{The discrete setting}
\label{sec:discrete}
This section contains a modest extension of ideas in unpublished notes \cite{guillou} of Guillou. 

Let $G$ be a group and $\C$ a Quillen  model category \cite{quillen}.  

After defining the category of $G$-objects $\C^G$ in $\C$ and fixed point functors, we want to equip $\C^G$ with the $\mathcal F$-model structure, where the weak equivalences and fibrations are the maps that are taken to weak equivalences and fibrations, by each $H$-fixed point functor for $H$ in a fixed collection $\mathcal F$ of subgroups of $G$. The strategy is to construct left adjoints to the $H$-fixed point functors and then to apply Transfer Theorem \ref{transport}. The second approach to equivariant homotopy theory for $\C$ is to equip the category of orbit diagrams $\COFop$ with the projective model structure. We then compare the two approaches. They turn out to be Quillen equivalent in a variety of examples.   

\subsection{$G$-objects}
Identify the group $G$ with the category with one object $\ast$ and set of morphisms $G$. 
\begin{defn}
The category of \emph{$G$-objects in $\C$} is the category $\C^G$ of functors from $G$ to $\C$. For a subgroup $H$ of $G$, the \emph{$H$-fixed point functor $(-)^H$} is defined as the composition
\[
\C^G\rightarrow \C^H \stackrel{\lim}{\longrightarrow} \C
\]
of the restriction functor with the limit functor.
\end{defn}
As long as the required limit exists, we can of course use the same definition in any category.

Denote the category of sets by $\Set$.
\begin{ex}
The category of $G$-objects in $\Set$ coincides with the category of $G$-sets, i.e., of sets with a $G$-action. Moreover for any subgroup $H$ of $G$, the $H$-fixed point functor takes a $G$-set $X$ to its ordinary fixed point set 
\[X^H=\{x\in X| hx=x \text{ for all }h\in H\}.
\]
\end{ex}

We are interested in the existence of the following model category structure.
\begin{defn}
Let $\F$ be a collection of subgroups of $G$, i.e., a set of subgroups that is closed under conjugation. The category $\C^G$ is said to admit the \emph{$\F$-model structure} if it is a model category with weak equivalences and fibrations the maps that are taken to weak equivalences and fibrations in $\C$ by each $H$-fixed point functor with $H\in \F$.
\end{defn}

The strategy to equip $\C^G$ with the $\F$-model structure is to assume that $\C$ is cofibrantly generated and to transfer its model structure along the left adjoints of the fixed point functors $(-)^H$ with $H\in \F$. Abstractly, the left adjoint of $(-)^H$ is the composition
\[\C\to\C^H\to \C^G
\]
of the constant diagram functor and the induction functor, i.e., the left Kan extension functor of the inclusion functor $H\to G$. An explicit description involves the orbit set $G/H$ as a counterpart. Consider $G/H$ as a $G$-set with action of $g\in G$ on a coset $g'H$ defined by $(gg')H$. It is well-known \cite{bredonequi} that $G/H$ represents the $H$-fixed point functor $(-)^H\colon \Set^G\to \Set$.
\begin{lem}
\label{fixedpointrepresentable}
Evaluating a $G$-map $G/H\to X$ in the coset $H$ yields an isomorphism 
\[\Set^G(G/H,X)\cong X^H
\]
that is natural in the $G$-set $X$. 
\end{lem}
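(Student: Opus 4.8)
The plan is to exhibit the natural map explicitly and check it is a bijection by a direct argument, using nothing more than the definition of a $G$-map and the orbit structure of $G/H$. Given a $G$-set $X$, define $\Phi_X\colon \Set^G(G/H,X)\to X^H$ by $\Phi_X(f)=f(H)$, where $H\in G/H$ denotes the coset of the identity. First I would verify that $\Phi_X$ is well-defined, i.e.\ that $f(H)$ really lies in $X^H$: for $h\in H$ we have $f(H)=f(hH)=hf(H)$ since $f$ is equivariant and $hH=H$ in $G/H$, so $f(H)$ is fixed by every element of $H$.

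Next I would construct the inverse. Given $x\in X^H$, define $\Psi_X(x)\colon G/H\to X$ by $\Psi_X(x)(gH)=gx$. The key point here is that this is well-defined: if $gH=g'H$ then $g^{-1}g'\in H$, so $(g^{-1}g')x=x$ because $x$ is $H$-fixed, hence $gx=g'x$. It is then immediate that $\Psi_X(x)$ is a $G$-map, that $\Phi_X\Psi_X(x)=\Psi_X(x)(H)=x$, and conversely that $\Psi_X(\Phi_X(f))=f$, since $\Psi_X(\Phi_X(f))(gH)=g\cdot f(H)=f(gH)$ by equivariance, and $G$ acts transitively on $G/H$ so these agree on all of $G/H$. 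This shows $\Phi_X$ is a bijection with inverse $\Psi_X$.

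Finally I would check naturality: for a $G$-map $\varphi\colon X\to Y$ we must see that $\Phi_Y\circ(\varphi\circ-)=\varphi^H\circ\Phi_X$ as maps $\Set^G(G/H,X)\to Y^H$, where $\varphi^H\colon X^H\to Y^H$ is the restriction of $\varphi$. Both sides send $f$ to $\varphi(f(H))$, so the square commutes on the nose. (Strictly, one should also recall that $\varphi^H$ restricts to $X^H$ because $\varphi$ is equivariant, which is the same observation as in the well-definedness of $\Phi$.)

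Honestly, there is no real obstacle: the statement is the standard co-Yoneda-type computation identifying $G/H$ as the representing object for $H$-fixed points, and every step is a one-line verification from the definitions. The only thing requiring the slightest care is the well-definedness of $\Psi_X(x)$ on cosets, which is exactly where the hypothesis $x\in X^H$ is used. This lemma will serve as the $\Set$-level input for the subsequent construction of left adjoints to the fixed point functors $(-)^H$ on $\C^G$.
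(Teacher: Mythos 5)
Your proof is correct and complete; the paper itself omits the argument, simply citing it as well-known from Bredon, and the standard verification you give (evaluation at the coset $H$, with inverse $x\mapsto (gH\mapsto gx)$, well-defined precisely because $x$ is $H$-fixed) is exactly the intended one. Nothing to add.
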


Before the explicit description of the left adjoint of $(-)^H$, recall that $\C$ is cocomplete and complete and thus tensored and cotensored over $\Set$. Indeed, we have isomorphisms \[\C(X\otimes A, B)\cong \Set(X,\C(A,B))\cong \C(A,X\pitchfork B)
\] 
that are natural in the set $X$ and objects $A$, $B$ of $\C$ where the tensor $X\otimes A$ is the copower $\coprod_X A$ and the cotensor $X\pitchfork B$ is the power $\prod_X B$.

For any object $A$ of $\C$ and homogeneous $G$-set $G/H$, denote the composition
\[G\stackrel{G/H}{\longrightarrow}\Set\stackrel{-\otimes A}{\longrightarrow} \C
\]
by $G/H\otimes A$.

\begin{lem}
The functor $G/H\otimes -\colon \C\to \C^G$ is left adjoint to the $H$-fixed point functor.
\end{lem}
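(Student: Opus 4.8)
The plan is to exhibit a bijection $\C^G(G/H\otimes A, X)\cong\C(A,X^H)$, natural in the object $A$ of $\C$ and the $G$-object $X$, by composing three isomorphisms: the tensor--cotensor adjunction of $\C$ over $\Set$, the representability statement of Lemma \ref{fixedpointrepresentable}, and continuity of the representable functor $\C(A,-)$. I would begin by noting that $G/H\otimes A$ is indeed an object of $\C^G$, being the composite of $G/H\colon G\to\Set$ with $-\otimes A\colon\Set\to\C$. Given a $G$-object $X$, write $\C(A,X)$ for the $G$-set with underlying set $\C(A,X(*))$ on which $g\in G$ acts by postcomposition with $X(g)$.

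The first, and main, step is to establish for every $G$-set $S$ a natural isomorphism $\C^G(S\otimes A,X)\cong\Set^G(S,\C(A,X))$. A morphism of $G$-objects $S\otimes A\to X$ is a natural transformation of functors $G\to\C$, hence is the same datum as a morphism $\eta\colon S\otimes A\to X(*)$ in $\C$ satisfying $\eta\circ(S(g)\otimes A)=X(g)\circ\eta$ for all $g\in G$. Under the tensor adjunction $\C(S\otimes A,X(*))\cong\Set(S,\C(A,X(*)))$ the morphism $\eta$ corresponds to a function $\tilde\eta\colon S\to\C(A,X(*))$ with $\tilde\eta(s)=\eta\circ\iota_s$, where $\iota_s$ is the $s$-th inclusion into $\coprod_S A$. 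The one point that needs care is to check that the equivariance condition on $\eta$ is carried, under this bijection, exactly to the condition $\tilde\eta(S(g)(s))=X(g)\circ\tilde\eta(s)$ for all $s\in S$; this follows from naturality of the tensor adjunction in the $\Set$-variable (applied to $S(g)\colon S\to S$) together with naturality in the target variable (applied to postcomposition with $X(g)$). The resulting condition says precisely that $\tilde\eta$ is a map of $G$-sets $S\to\C(A,X)$.

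Taking $S=G/H$ and applying Lemma \ref{fixedpointrepresentable} to the $G$-set $\C(A,X)$ gives $\Set^G(G/H,\C(A,X))\cong\C(A,X)^H$, which unwinds to the ordinary fixed-point set $\{\phi\colon A\to X(*)\mid X(h)\circ\phi=\phi\text{ for all }h\in H\}$. Since $X^H$ is by definition the limit of the restriction of $X$ along $H\hookrightarrow G$, and $\C(A,-)$ preserves limits, this set is canonically $\C(A,X^H)$. Composing the three isomorphisms yields $\C^G(G/H\otimes A,X)\cong\C(A,X^H)$, natural in $A$ and $X$, which is the asserted adjunction.

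As a sanity check, this agrees with the abstract description: the left adjoint of $(-)^H=\lim\circ(\text{restriction})$ is the composite of the constant-diagram functor $\C\to\C^H$ (left adjoint to $\lim$) with the left Kan extension along $H\hookrightarrow G$ (left adjoint to restriction). The content of the lemma is merely the identification of this composite with the explicit functor $G/H\otimes-$, which the chain of isomorphisms above makes precise.
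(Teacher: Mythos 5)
Your proof is correct and follows exactly the same route as the paper: the composite of the tensor adjunction $\C^G(G/H\otimes A,X)\cong\Set^G(G/H,\C(A,X))$, Lemma \ref{fixedpointrepresentable}, and limit-preservation of $\C(A,-)$. You simply spell out the equivariance bookkeeping in the first isomorphism, which the paper leaves implicit.
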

\begin{proof}
The adjunction isomorphism is the composite
\begin{equation*}
\begin{split}
\C^G(G/H\otimes A,X)& \cong \Set^G(G/H,\C(A,X)) \\
& \cong \C(A,X)^H \\
& \cong \C(A,X^H),
\end{split}
\end{equation*}
where the first isomorphism is induced by the isomorphism expressing that $\C$ is tensored, the second isomorphism is given by Lemma~\ref{fixedpointrepresentable} and the third one comes from the fact that $\C(A,-)$ preserves limits.
\end{proof}

For cofibrantly generated $\C$, we can transfer the model structure from $\C$ to the category of $G$-objects in $\C$ under a condition on the fixed point functors. The condition is motivated by the following fact from equivariant topology. Recall that a $G$-CW complex (\cite[p.~98]{tomdiecktransf}, \cite[p.~13]{mayequi}) is a $G$-space that is obtained by attaching equivariant disks $G/K\times D^n$ along their boundaries $G/K\times S^{n-1}$. Let $H$ be a subgroup of the (discrete) group $G$. Applying the $H$-fixed point functor to a $G$-CW complex $X$ yields a CW complex $X^H$ as the $H$-fixed point functor preserves the colimits involved in the building process of $X$ and takes an equivariant disk $G/K\times D^n$ to the disjoint union of disks $(G/K)^H\times D^n$.
\begin{prop}
\label{propFmodelstructure}
Let $G$ be a group and $\F$ a collection of subgroups of $G$. Suppose that for any $H\in \F$, the $H$-fixed point functor satisfies the following \emph{cellularity conditions}:
\begin{enumerate}[i)]
\label{cellular}
\item
\label{cellular1}
$(-)^H$ preserves directed colimits of diagrams in $\C^G$, where each underlying arrow in $\C$ is a cofibration,
\item
\label{cellular2}
$(-)^H$ preserves pushouts of diagrams, where one leg is of the form
\[
G/K\otimes f\colon G/K\otimes A\to G/K\otimes B,
\]
for $K\in \F$ and $f$ a cofibration in $\C$, and
\item
\label{cellular3}
for any $K\in \F$ and object $A$ of $\C$, the induced map 
\[(G/K)^H\otimes A\to (G/K\otimes A)^H\] 
is an isomorphism in $\C$.
\end{enumerate}
If $\C$ is cofibrantly generated, then the category of $G$-objects $\C^G$ admits the $\F$-model structure and is cofibrantly generated.
\end{prop}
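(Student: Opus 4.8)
The plan is to deduce the proposition from Transfer Theorem \ref{transport}, transferring the cofibrantly generated model structure of $\C$ simultaneously along the set of adjunctions $\{\,G/H\otimes -\ \dashv\ (-)^H\,\}_{H\in\F}$. Fix a set $I$ of generating cofibrations and a set $J$ of generating trivial cofibrations of $\C$. The candidate generating sets in $\C^G$ are then $I_\F=\{\,G/H\otimes i\mid H\in\F,\ i\in I\,\}$ and $J_\F=\{\,G/H\otimes j\mid H\in\F,\ j\in J\,\}$, and a map of $\C^G$ should be a weak equivalence, respectively a fibration, precisely when $(-)^H$ sends it to one for every $H\in\F$; this is the $\F$-model structure. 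Since $\C$ is complete and cocomplete, so is the functor category $\C^G$, and it remains to verify the remaining hypotheses of Theorem \ref{transport}: that the domains of the maps in $I_\F$ and $J_\F$ are small relative to $I_\F$-cell complexes and $J_\F$-cell complexes respectively, so that the small object argument applies, and the acyclicity condition that $(-)^H$ carries every relative $J_\F$-cell complex to a weak equivalence for each $H\in\F$.

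Both verifications hinge on one observation: each $(-)^H$ with $H\in\F$ sends a relative $I_\F$-cell complex to a relative $I$-cell complex in $\C$, and likewise with $J$ in place of $I$. To see this, note first that for $K\in\F$ and a map $f$ of $\C$, condition \ref{cellular3} applied at the source and target of $f$, together with the description of the tensor over $\Set$, yields a natural isomorphism $(G/K\otimes f)^H\cong(G/K)^H\otimes f\cong\coprod_{(G/K)^H}f$, a coproduct of copies of $f$ indexed by the set $(G/K)^H$. Now a relative $I_\F$-cell complex is a transfinite composition of pushouts along coproducts of maps in $I_\F$; after well-ordering each coproduct index set, such a pushout is itself a transfinite composition of pushouts along single maps $G/K\otimes i$. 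Condition \ref{cellular2} lets $(-)^H$ commute past each such single pushout and condition \ref{cellular1} past each transfinite composition, where one uses that the underlying map in $\C$ of $G/K\otimes i$ is the cofibration $\coprod_{G/K}i$, so that every map occurring in the tower has underlying cofibration and \ref{cellular1} indeed applies. Hence $(-)^H$ takes the original relative $I_\F$-cell complex to a transfinite composition of pushouts of coproducts of maps in $I$, i.e.\ to a relative $I$-cell complex in $\C$; replacing $I$, $i$ by $J$, $j$ gives the statement for $J_\F$.

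Granting this, smallness is immediate: the domain of $G/H\otimes i$ is $G/H\otimes A$ with $A=\mathrm{dom}(i)$, and by adjunction $\C^G(G/H\otimes A,-)\cong\C(A,(-)^H)$; since $A$ is small relative to $I$-cell complexes in the cofibrantly generated category $\C$ and $(-)^H$ turns $I_\F$-cell complexes into $I$-cell complexes, the object $G/H\otimes A$ is small relative to $I_\F$-cell complexes, and the same argument works for $J_\F$. For acyclicity, let $X\to Y$ be a relative $J_\F$-cell complex and $H\in\F$; by the observation $(X\to Y)^H$ is a relative $J$-cell complex in $\C$, hence --- as coproducts, pushouts and transfinite compositions of trivial cofibrations are again trivial cofibrations --- a trivial cofibration, in particular a weak equivalence. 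All hypotheses of Theorem \ref{transport} are thus met, and it produces a cofibrantly generated model structure on $\C^G$ with generating cofibrations $I_\F$ and generating trivial cofibrations $J_\F$, whose weak equivalences and fibrations are the maps $f$ such that $f^H$ is one for every $H\in\F$; this is exactly the $\F$-model structure.

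The main obstacle is the bookkeeping behind the central observation: passing $(-)^H$ through transfinite composites of pushouts along coproducts indexed by arbitrary sets, while checking at each step that the relevant maps are cofibrations, as required by \ref{cellular1}, or of the prescribed pushout form, as required by \ref{cellular2}. This is precisely what the three cellularity conditions are engineered to permit, so once the reduction of $(-)^H$ on cell complexes to cell complexes in $\C$ is established, the rest follows formally from Theorem \ref{transport}.
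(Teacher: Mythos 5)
Your proposal is correct and follows essentially the same route as the paper: transfer along the set of adjunctions $\{G/H\otimes -\dashv(-)^H\}_{H\in\F}$ via Theorem \ref{transport}, with the key observation that the cellularity conditions let $(-)^H$ carry a relative $I_\F$-cell (resp.\ $J_\F$-cell) complex to a transfinite composition of pushouts of coproducts $\coprod_{(G/K)^H}f$ of generating (acyclic) cofibrations. The only cosmetic difference is that you unwind the smallness and acyclicity verifications by hand, whereas the paper packages them as hypotheses a)--d) of Lemma \ref{lemtransport1}.
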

\begin{proof}
Denote the set of generating cofibrations of $\C$ by $I$ and of generating acyclic cofibrations by $J$. We apply the Transfer Theorem \ref{transport} to the adjunctions $\{G/H\otimes -, (-)^H\}_{H\in\mathcal F}$. As $\C$ is complete and cocomplete, so is the functor category $\C^G$. To check conditions \ref{cellular1} and \ref{cellular2}, we use Lemma~\ref{lemtransport1}, which in turn applies by the cellularity conditions. Indeed, writing $I_{\mathcal F}=\{G/H\otimes f| f\in I, H\in \mathcal F\}$, note that the underlying map in $\C$ of a relative $I_{\mathcal F}$-cell complex is a transfinite composition of pushouts of coproducts $\coprod_{G/K} f$ of generating cofibrations of $\C$, in particular a transfinite composition of cofibrations and therefore a cofibration itself. Thus by the cellularity conditions for $H\in \mathcal F$, the $H$-fixed point functor takes a relative $I_{\mathcal F}$-cell complex to a transfinite composition of pushouts of coproducts $\coprod_{(G/K)^H}f$ of generating cofibrations, which proves \ref{lemtransport1a} of Lemma~\ref{lemtransport1}. Condition \ref{lemtransport1c} holds by the cellularity condition \ref{cellular1}. One shows \ref{lemtransport1b} and \ref{lemtransport1d} similarly.       
\end{proof}

\begin{rem}
\label{rem:cellular}
Instead of the cellularity condition \ref{cellular3}, we could have required the composite $(G/K\otimes -)^H$ to take generating cofibrations to cofibrations and generating acyclic cofibrations to acyclic cofibrations. The form stated in the proposition is crucial for comparing the $\mathcal F$-model structure with the orbit diagrams in $\C$. 
\end{rem}
 A toy example, where condition \ref{cellular3} does not hold for $H=G=C_2$ the cyclic group of order two and $K$ the trivial subgroup, is the object $A=1$ in the category $0\to 1$ of two objects and one non-identity morphism as depicted.

Examples of cofibrantly generated model categories where the cellularity conditions are satisfied for any $\mathcal F$ are given in \S\ref{sec:positivieexamples}. The checking of the cellularity condition \ref{cellular2} can be reduced to generating cofibrations $f$ as in \cite{bohmannetal}.

\begin{prop}
\label{prop:cellular2}
Let $\mathcal F$ be a collection of subgroups of $G$. Let $\C$ be a cofibrantly generated model category. Suppose that for any $H\in \mathcal F$, the $H$-fixed point functor satisfies the cellularity condition \ref{cellular1} and the cellularity condition \ref{cellular2} for generating cofibrations $f\colon A\to B$. Then the $H$-fixed point functor satisfies the cellularity condition \ref{cellular2} for all cofibrations $f\colon A\to B$.
\end{prop}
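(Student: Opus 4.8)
The plan is to fix $H\in\F$ satisfying the cellularity condition \ref{cellular1} and the cellularity condition \ref{cellular2} for generating cofibrations, to fix $K\in\F$, and to study the class $\mathcal A$ of all cofibrations $f$ of $\C$ such that $(-)^H$ sends every pushout in $\C^G$ of a span one of whose legs is $G/K\otimes f$ to a pushout in $\C$. The hypothesis says exactly that $\mathcal A$ contains the set $I$ of generating cofibrations of $\C$. Since in a cofibrantly generated model category every cofibration is a retract of a relative $I$-cell complex (\cite{hovey}, \cite{hirschhorn}), it suffices to show that $\mathcal A$ is closed under pushout, coproduct, transfinite composition and retract; then $\mathcal A$ contains every relative $I$-cell complex, hence every cofibration, and as $K\in\F$ was arbitrary this is the assertion.

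The input used throughout is that $G/K\otimes-$, being left adjoint to the $K$-fixed point functor, preserves all colimits, together with the fact that evaluation $\C^G\to\C$ at $*$ preserves colimits and carries $G/K\otimes f$ to $\coprod_{G/K}f$, a cofibration in $\C$ whenever $f$ is. Closure of $\mathcal A$ under pushout is then a diagram chase: if $f'\colon C'\to D'$ is a pushout of $f\in\mathcal A$ along a map $C\to C'$, then $G/K\otimes D'$ is the pushout of $G/K\otimes f$ along $G/K\otimes(C\to C')$ since $G/K\otimes-$ preserves pushouts, and combining the defining property of $\mathcal A$ with the pasting law for pushouts identifies $(-)^H$ of an arbitrary pushout along $G/K\otimes f'$ with the expected one. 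Closure under retract is similar, using only that $(-)^H$ and $G/K\otimes-$ are functors and so preserve retracts of squares. Closure under coproduct is then formal, a coproduct of cofibrations being the transfinite composition (over a well-ordering of the indexing set) of the pushouts of its summands; together with $I\subseteq\mathcal A$ this already gives that relative $I$-cell complexes lie in $\mathcal A$.

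The substantive step is closure under transfinite composition, and it is the only place the cellularity condition \ref{cellular1} enters. Suppose $f\colon A\to B$ is a transfinite composition of maps of $\mathcal A$, realized by a $\lambda$-sequence $A=A_0\to A_1\to\cdots$ with colimit $B$ whose successor maps lie in $\mathcal A$, and let $Y=X\cup_{G/K\otimes A}(G/K\otimes B)$ be a pushout along $G/K\otimes f$. One writes $Y=\colim_\beta Y_\beta$ with $Y_\beta=X\cup_{G/K\otimes A}(G/K\otimes A_\beta)$, since $G/K\otimes-$ and pushouts commute with the colimit; each transition $Y_\beta\to Y_{\beta+1}$ is a pushout along $G/K\otimes(A_\beta\to A_{\beta+1})$, so it, and likewise $G/K\otimes A_\beta\to G/K\otimes A_{\beta+1}$, has underlying cofibration in $\C$. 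A transfinite induction then establishes $Y_\beta^H\cong X^H\cup_{(G/K\otimes A)^H}(G/K\otimes A_\beta)^H$ for all $\beta$: at successor stages one uses that $A_\beta\to A_{\beta+1}$ lies in $\mathcal A$ together with the pasting law, and at a limit ordinal $\mu$ one invokes \ref{cellular1} to pull $(-)^H$ through the directed colimits $Y_\mu=\colim_{\beta<\mu}Y_\beta$ and $G/K\otimes A_\mu=\colim_{\beta<\mu}(G/K\otimes A_\beta)$, whose structure maps are underlying cofibrations by the preceding remark. Evaluating at $\beta=\lambda$ gives that $(-)^H$ preserves $Y$, i.e.\ $f\in\mathcal A$. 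I expect the only genuine difficulty to be the bookkeeping in this induction — checking that at each stage the arrows of $\C^G$ in play are underlying cofibrations, so that \ref{cellular1} is applicable — rather than anything conceptual.
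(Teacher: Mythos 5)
The paper states Proposition \ref{prop:cellular2} without proof, deferring to \cite{bohmannetal}, so there is no in-paper argument to compare against. Judged on its own, your proposal is correct and is the argument one would expect: induct over the cell structure of a cofibration, using the hypothesis on generating cofibrations at the cells, the pasting law at successor stages, and the cellularity condition \ref{cellular1} at limit stages (after checking that the transition maps of the relevant colimit diagrams are underlying cofibrations, which you do). The only step I would ask you to write out more fully is closure of $\mathcal A$ under pushout: if $f'$ is the pushout of $f\in\mathcal A$ along $C\to C'$, the defining property of $f$ must be invoked \emph{twice} --- once for the square exhibiting $G/K\otimes D'$ as $(G/K\otimes C')\cup_{G/K\otimes C}(G/K\otimes D)$, so as to compute $(G/K\otimes D')^H$, and once for the outer square --- before the pasting law in $\C$ identifies the canonical comparison map for the pushout along $G/K\otimes f'$ with an isomorphism; your phrase ``combining the defining property of $\mathcal A$ with the pasting law'' covers this but compresses it. Likewise, throughout the transfinite induction one should track that the isomorphisms produced are the canonical comparison maps, as you note; with that bookkeeping made explicit the proof is complete.
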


\subsection{Orbit diagrams and comparison of the two approaches}
The second approach to equivariant homotopy theory for the model category $\C$ is via contravariant orbit diagrams with levelwise weak equivalences and fibrations, i.e., with the \emph{projective model structure}. As for any functor category, this works for cofibrantly generated $\C$. The indexing category is the following. 

Let $\mathcal F$ be a collection of subgroups of $G$. The \emph{orbit category $\OF$ of $G$ with respect to $\mathcal F$} is the full subcategory of the category $\Set^G$ of $G$-sets given by the orbit sets $G/H$ with $H\in \mathcal F$.

The maps $G/H\to G/K$ of the orbit category are described by Lemma~\ref{fixedpointrepresentable}. For $a\in G$ such that the coset $aK$ is in $(G/K)^H$, i.e., such that $a^{-1}Ha\subset K$, we denote the corresponding morphism $G/H \to G/K$ by $R_a$. It sends $gH$ to $gaK$.

We are ready to compare the two approaches. If $\mathcal F$ contains the trivial subgroup $\{e\}$, then sending a morphism $g$ of $G$ to the $G$-map $G/\{e\}\to G/\{e\}$, $h\mapsto hg$, defines a functor $i\colon G \to \OFop$.
\begin{lem}
\label{quillenpair}
Let $\F$ be a collection of subgroups of $G$ containing the trivial subgroup $\{e\}$. The precomposition functor $i^*\colon \COFop\to \C^G$ has a fully faithful right adjoint. If $\COFop$ admits the projective model structure and $\C^G$ admits the $\F$-model structure, then $i^*$ is a left Quillen functor.

\end{lem}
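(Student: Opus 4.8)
The plan is to take the right adjoint of $i^*$ to be the right Kan extension $i_* := \mathrm{Ran}_i\colon \C^G \to \COFop$ along $i$. Since $G$ and $\OFop$ are small and $\C$ is complete, $i_*$ exists (computed pointwise by limits over comma categories) and is right adjoint to the restriction functor $i^*$ by the universal property of Kan extensions. It then remains to verify two things: that $i_*$ is fully faithful, and — once the two model structures are assumed to exist — that $i_*$ preserves fibrations and acyclic fibrations, so that $(i^*, i_*)$ is a Quillen pair with $i^*$ left Quillen.

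For full faithfulness of $i_*$ I would first note that $i$ itself is fully faithful. Indeed, $i$ sends the unique object of $G$ to $G/\{e\}\in\OF$, and on the one relevant hom-set it induces a map $G = G(*,*) \to \OFop(G/\{e\},G/\{e\})$. By Lemma~\ref{fixedpointrepresentable}, $\OF(G/\{e\},G/\{e\}) = \Set^G(G/\{e\},G/\{e\})$ is identified, by evaluation at the coset $\{e\}$, with $(G/\{e\})^{\{e\}} = G/\{e\}$, and under this identification $i$ sends $g$ (the $G$-map $h\mapsto hg$) to $e\cdot g = g$; so $i$ induces on $\OFop(G/\{e\},G/\{e\})$ exactly the identity of the set $G$. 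Hence $i$ is fully faithful, the comma category $(i(*)\downarrow i)$ is isomorphic to the coslice $(*\downarrow G)$ and so has an initial object, and the pointwise formula therefore computes the counit $i^*i_* \Rightarrow \id_{\C^G}$ as the evaluation of a limit at an initial object, which is an isomorphism. Thus $i_*$ is fully faithful.

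Next I would compute $i_*$ pointwise and match it with the fixed-point functors. For $H\in\F$, the right Kan extension formula gives $(i_*Y)(G/H) = \lim\bigl( (G/H\downarrow i) \to G \xrightarrow{Y} \C \bigr)$. Unwinding the morphisms of $\OFop$ via Lemma~\ref{fixedpointrepresentable}, the comma category $(G/H\downarrow i)$ is the translation groupoid of the left $G$-action on the set $G/H$: its objects are the cosets $gH$, a morphism $gH \to g'H$ is an element $\phi\in G$ with $\phi gH = g'H$, composition is multiplication in $G$, and the functor to $G$ is the evident one followed by $Y$. This groupoid is connected and the automorphism group of $eH$ is $H$, so the limit reduces to the limit over $H$ of the restriction of $Y$, namely $Y^H$, and the identification is natural in $Y$. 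Hence $(i_*f)_{G/H} = f^H$ for every morphism $f$ of $\C^G$ and every $H\in\F$. (For $H=\{e\}$ this recovers, as it must, $i^*i_*Y = Y$.)

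Finally, the Quillen assertion falls out. The indexing category $\OF$ has objects exactly the $G/H$ with $H\in\F$, so the fibrations, resp.\ acyclic fibrations, of the projective model structure on $\COFop$ are the maps that evaluate to a fibration, resp.\ acyclic fibration, in $\C$ at every such $G/H$; and by definition the fibrations, resp.\ acyclic fibrations, of the $\F$-model structure on $\C^G$ are the maps $f$ with $f^H$ a fibration, resp.\ acyclic fibration, for every $H\in\F$. By the previous paragraph $i_*$ therefore preserves fibrations and acyclic fibrations, so it is right Quillen and $i^*$ is left Quillen. The one genuinely non-formal ingredient is the pointwise identification $(i_*Y)(G/H)\cong Y^H$ — that is, recognizing $(G/H\downarrow i)$ as the translation groupoid of the $G$-action on $G/H$; once that is in place, everything else is just unwinding definitions.
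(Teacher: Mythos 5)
Your proof is correct and arrives at the same functor as the paper --- the right adjoint sending $X$ to the diagram $G/H\mapsto X^H$ of fixed points --- but by a genuinely different route. The paper simply writes this functor down ($i_*X(G/H)=X^H$, with $i_*X(R_a)$ induced by $X^K\to X(*)\xrightarrow{X(a)}X(*)$), exhibits the counit as the isomorphism $X^{\{e\}}\cong X(*)$ and the unit $\eta_T$ as induced by $T(R_e)\colon T(G/H)\to T(G/\{e\})$, and checks the triangle identities directly; full faithfulness then follows because the counit is an isomorphism. You instead invoke the general existence of the pointwise right Kan extension $\mathrm{Ran}_i$ and put the real work into identifying the comma category $(G/H\downarrow i)$ with the translation groupoid of the $G$-action on $G/H$, whose equivalence with the one-object groupoid on the stabilizer $H$ of $eH$ yields $(i_*Y)(G/H)\cong Y^H$ naturally in $Y$; full faithfulness is deduced from full faithfulness of $i$ together with the initial object of $(*\downarrow G)$. (Your verification that $i$ is fully faithful via Lemma~\ref{fixedpointrepresentable}, and the identification of the stabilizer, are both correct under the paper's conventions for $R_a$.) Your approach buys conceptual economy --- no triangle identities to check, and the same argument works for any fully faithful $i$ --- at the cost of the comma-category computation, which as you say is the one non-formal step. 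The paper's hands-on construction has the advantage that the explicit formula for $\eta_T$ is exactly what is needed later, in the proof of Theorem~\ref{mainthm1}, to evaluate the unit on the cells $\OFop(G/K,-)\otimes C$ and invoke the cellularity condition. Both arguments conclude the left Quillen statement identically from the levelwise description of $i_*$ and the definitions of the two model structures.
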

\begin{proof}
We describe explicitly the right adjoint $i_*$ of $i^*$. Let $i_*\colon \C^G\to \COFop$ be the functor sending a $G$-object $X$ to the orbit diagram of its fixed point objects. That is $i_* X(G/H)= X^H$ and $i_* X$ applied to some morphism in $\OF$ of the form $R_a\colon G/H\to G/K$ is the map $i_* X(R_a)\colon X^K\to X^H$ induced by the composite $X^K\to X(*)\stackrel{X(a)}{\rightarrow} X(*)$.

Then $i_*$ is right adjoint to $i^*$. Indeed, let $\varepsilon\colon i^*i_* \to \id_{\C^G}$ be the natural transformation given in a $G$-object $X$ by the isomorphism with underlying map $X^{\{e\}}\to X(*)$ in $\C$.

We define a natural transformation $\eta\colon \id_{\COFop}\to i_*i^*$. Let $T$ be an orbit diagram. For an orbit set $G/H$ of $\OFop$, let $(\eta_T)_{G/H}$ be the map from $T(G/H)$ to $(i_*i^*(T))(G/H)=i^*(T)^H$ in $\C$ induced by 
\[T(R_e)\colon T(G/H)\to T(G/\{e\})=i^*(T)(*).\] 
This defines a map $\eta_T\colon T\to i_*i^*(T)$ of orbit diagrams that is natural in $T$.

One checks that $\varepsilon$ is the counit and $\eta$ is the unit of an adjunction $(i^*, i_*)$. Moreover, the right adjoint $i_*$ is fully faithful, since the counit is an isomorphism.

Suppose that $\COFop$ admits the projective and $\C^G$ the $\mathcal F$-model structure. It remains to show that $i_*$ is a right Quillen functor, i.e., that $i_*$ preserves fibrations and acyclic fibrations. But this follows immediately by construction of $i_*$ and the definition of the model category structures. 
\end{proof}

The toy example $\C= 0 \to 1$ equipped with the trivial model structure, where the weak equivalences are the isomorphisms, shows that $\COFop$ and $\C^G$ are not Quillen equivalent in general. For instance for $G$ the cyclic group of order two and $\mathcal F$ the collection of all subgroups, the category of contravariant orbit diagrams has three objects, which are pairwise non-isomorphic, whereas the category of $G$-objects in $\C$ has only two objects.

\begin{thm}
\label{mainthm1}
Let $G$ be a group and $\F$ a collection of subgroups of $G$ containing the trivial subgroup. Suppose that $\C$ is a cofibrantly generated model category and that for any $H\in \F$, the $H$-fixed point functor satisfies the cellularity conditions \ref{cellular}. Then there is a Quillen equivalence
\[
i^*\colon\COFop \leftrightarrows \C^G\colon i_*
\]
between the category of contravariant orbit diagrams with the projective model structure and of $G$-objects with the $\F$-model structure.
\end{thm}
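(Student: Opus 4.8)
The plan is to verify that the Quillen pair $(i^*,i_*)$ of Lemma~\ref{quillenpair} is a Quillen equivalence. Since $\C$ is cofibrantly generated, $\COFop$ admits the projective model structure, and by Proposition~\ref{propFmodelstructure} the category $\C^G$ admits the $\F$-model structure; so Lemma~\ref{quillenpair} does yield a Quillen adjunction with fully faithful right adjoint $i_*$. By a standard recognition criterion for Quillen equivalences (see e.g.\ \cite{hovey} or \cite{hirschhorn}) it then suffices to show that $i_*$ reflects weak equivalences between fibrant objects and that the total derived unit is a weak equivalence at every cofibrant object $T$ of $\COFop$.

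The first condition holds because $i_*$ in fact preserves and reflects \emph{all} weak equivalences: as $(i_*X)(G/H)=X^H$ for each $H\in\F$, a map $X\to Y$ of $G$-objects is an $\F$-weak equivalence exactly when $i_*X\to i_*Y$ is a levelwise, i.e.\ projective, weak equivalence. Since $i_*$ preserves weak equivalences, the total derived unit at $T$ --- the unit $\eta_T\colon T\to i_*i^*T$ followed by $i_*$ applied to a fibrant replacement $i^*T\to(i^*T)^{\mathrm{fib}}$ in $\C^G$ --- is a weak equivalence if and only if $\eta_T$ is. So the theorem reduces to showing that $\eta_T$ is a weak equivalence for every cofibrant $T$; we shall prove it is an isomorphism.

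Consider first a free orbit diagram $\OFop(G/K,-)\otimes A$ at some $G/K$ with $K\in\F$, in the $\otimes$-notation of this section: it sends $G/H$ to $\OF(G/H,G/K)\otimes A$. Identifying $\OF(G/\{e\},G/K)$ with the $G$-set $G/K$ by Lemma~\ref{fixedpointrepresentable}, one checks that $i^*$ carries this diagram to the free $G$-object $G/K\otimes A$, and a direct computation with the definition of $\eta$ (again via Lemma~\ref{fixedpointrepresentable}) shows that at each $G/H$ the component of $\eta$ on this diagram is precisely the canonical comparison map $(G/K)^H\otimes A\to(G/K\otimes A)^H$, which is an isomorphism by the cellularity condition~\ref{cellular3}. (As Remark~\ref{rem:cellular} anticipates, it is essential here that \ref{cellular3} supplies an actual isomorphism, not merely good behaviour of $(G/K\otimes-)^H$ on cofibrations.) In particular $\eta$ is an isomorphism on the domains and codomains of the generating projective cofibrations $\OFop(G/K,-)\otimes f$ with $K\in\F$ and $f$ a generating cofibration of $\C$, and on the initial object of $\COFop$, whose image under $i^*$ is initial in $\C^G$ and which $i_*$ sends back to the initial object.

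Finally I would propagate this through cell attachments. Every cofibrant $T$ is a retract of a relative cell complex for the generating projective cofibrations, and a retract of an isomorphism of arrows is again an isomorphism, so we may assume $T$ is such a cell complex, with cells attached one at a time. Being a left adjoint, $i^*$ preserves all colimits and, by the computation above, sends each generating projective cofibration $\OFop(G/K,-)\otimes f$ to $G/K\otimes f$; hence $i^*$ carries $T$ to an $I_\F$-cell complex in $\C^G$, a transfinite composition of pushouts of maps $G/K\otimes f$, whose underlying arrows in $\C$ are cofibrations. By the cellularity conditions~\ref{cellular1} and~\ref{cellular2} --- exactly the bookkeeping in the proof of Proposition~\ref{propFmodelstructure} --- the functor $i_*$ preserves these pushouts and transfinite compositions, so $i_*i^*$ preserves the whole cell structure, while $\id_{\COFop}$ does so trivially. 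An induction along the cell structure, using that $\eta$ is a natural isomorphism on the cells and on the initial object, then gives that $\eta_T$ is an isomorphism for every such $T$, hence for every cofibrant $T$. The crux is the identification in the preceding paragraph of $\eta$ on free orbit diagrams via condition~\ref{cellular3}; the remainder is the standard cellular bookkeeping and the recognition criterion for Quillen equivalences.
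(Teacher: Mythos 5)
Your proposal is correct and follows essentially the same route as the paper: both reduce the Quillen equivalence to showing that the unit $\eta_T$ is an isomorphism for cofibrant $T$ (using that $i_*$ preserves and reflects all weak equivalences), verify this on free orbit diagrams via cellularity condition iii), and propagate through the cell structure via conditions i) and ii) and a retract argument. The only cosmetic difference is that you invoke the derived-unit recognition criterion where the paper checks the definition of Quillen equivalence directly; these amount to the same computation here.
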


\begin{proof}
The category of $G$-objects $\C^G$ admits the $\mathcal{F}$-model structure by Proposition \ref{propFmodelstructure}. Recall \cite[11.6.1]{hirschhorn} (or deduce from Theorem \ref{transport}) that any category of diagrams in the cofibrantly generated model category $\C$ admits the projective model structure and is again cofibrantly generated. In particular, the category of orbit diagrams $\COFop$ has generating cofibrations 
\[
I_{\OF}=\{\OFop(G/H,-)\otimes f| H\in \mathcal{F}, f\in I\},
\]
where $I$ is a set of generating cofibrations of $\C$. 

Consider the Quillen pair constructed in the proof of Lemma~\ref{quillenpair}. We show that it is a Quillen equivalence, i.e., that for any cofibrant object $T$ of $\COFop$ and fibrant object $X$ in $\C^G$, a morphism $f\colon i^*T\to X$ is a weak equivalence if and only if its adjoint $i_*(f)\eta_T\colon T\to i_*(X)$ is a weak equivalence. By definition of the model category structures, the map $f$ is a weak equivalence if and only if $i_*(f)$ is a weak equivalence. Thus $(i^*,i_*)$ is a Quillen equivalence if and only if the unit $\eta$ is a weak equivalence in every cofibrant $T\in\COFop$. 

We conclude by showing that $\eta_T$ is actually an isomorphism for cofibrant $T$. As any cofibrant $T$ is a retract of an $I_\OF$-cell complex, we can assume that $T$ itself is an $I_\OF$-cell complex. That is, there exists an ordinal $\lambda > 0$ and a $\lambda$-sequence $S\colon \lambda \to \COFop$ with colimit $T$, starting with $S_0$ the initial object and such that for any $\beta + 1 < \lambda$, there is a pushout square

\begin{equation*}
\xymatrix{
\OFop(G/K,-)\otimes A\ar[d]_{\OFop(G/K,-)\otimes f}\ar[r] & S_\beta\ar[d] \\
\OFop(G/K,-)\otimes B \ar[r] & S_{\beta +1}
}
\end{equation*}
for some $K\in \mathcal F$ and some generating cofibration $f$ in $\C$. Note that for any object $C$ of $\C$, the $G$-object $i^*(\OFop(G/K,-)\otimes C)$ is isomorphic to $G/K\otimes C$ by the isomorphism of Lemma~\ref{fixedpointrepresentable}. Moreover, the composite
\[
\OFop(G/K,-)\otimes C\stackrel{\eta}{\rightarrow} i_*i^*(\OFop(G/K,-)\otimes C)\cong i_*(G/K\otimes C) 
\] evaluated in an orbit set $G/H\in\OFop$ agrees with the composite

\[\OFop(G/K,G/H)\otimes C \cong (G/K)^H\otimes C \to (G/K\otimes C)^H
.
\]
Thus the unit is an isomorphism in the orbit diagram $\OFop(G/K,-)\otimes C$ by the cellularity condition \ref{cellular3}. 

By the cellularity conditions on the fixed point functors and since left adjoints preserve colimits, it follows that $i_*i^*T$ is the transfinite composition of pushouts 
\begin{equation*}
\xymatrix{
i_*i^*(\OFop(G/K,-)\otimes A)\ar[d]\ar[r] & i_*i^*S_\beta\ar[d] \\
 i_*i^*(\OFop(G/K,-)\otimes B) \ar[r] & i_*i^*S_{\beta +1}
.}
\end{equation*}
Note that $i_*$ preserves the initial object. Thus $\eta_{S_0}$ is an isomorphism and so is $\eta_T$, by transfinite induction.
\end{proof}

\begin{rem}
\label{rem:mainthm1}
To show that the Quillen pair is a Quillen equivalence, we only used the cellularity condition \ref{cellular3} for domains and codomains $A$ of generating cofibrations.
\end{rem}

\subsection{Positive examples}
\label{sec:positivieexamples}
Let $\mathcal F$ be a collection of subgroups of the group $G$ and let $H\in \mathcal F$. We will list examples of cofibrantly model categories $\C$, where the $H$-fixed point functor $(-)^H\colon \C^G\to \C$ satisfies the cellularity conditions and thus Theorem \ref{mainthm1} applies if $\mathcal F$ contains the trivial subgroup.

We show first how to build new examples out of given ones.
\begin{lem} Suppose that $\C$ is cofibrantly generated and that the $H$-fixed point functor $(-)^H\colon \C^G\to \C$ satisfies the cellularity conditions \ref{cellular}. Then the cellularity conditions are also satisfied for any diagram category $\CD$ with the projective model structure and any left Bousfield localization of $\C$.
\end{lem}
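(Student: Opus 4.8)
The plan is to handle the two assertions separately by soft arguments. For a diagram category $\CD$ with the projective model structure (which exists since $\C$ is cofibrantly generated), the key observation is that, under the canonical identification $(\CD)^G\cong(\C^G)^{\D}$, the $H$-fixed point functor together with all the constructions occurring in the cellularity conditions is computed objectwise in $\D$, so that each condition reduces objectwise to the corresponding condition for $(-)^H\colon\C^G\to\C$. For a left Bousfield localization of $\C$ the key observation is even softer: the cellularity conditions refer only to the cofibrations of $\C$ and to the underlying category tensored over $\Set$, and a left Bousfield localization changes neither.

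In more detail, for the diagram case I would first record that under $(\CD)^G\cong(\C^G)^{\D}$ a $G$-object $X$ of $\CD$ corresponds to the functor $d\mapsto X(d)$ carrying the pointwise $G$-action. Since $(-)^H$ is a limit and limits in $\CD$ are computed objectwise, the $H$-fixed point functor on $(\CD)^G$ becomes $d\mapsto X(d)^H$; since $\CD$ is tensored over $\Set$ objectwise, for a set $S$ and $A$ in $\CD$ one has $(S\otimes A)(d)=S\otimes A(d)$, so both $G/K\otimes A$ and $(G/K)^H\otimes A$ are objectwise and the comparison map in \ref{cellular3} is, at each object of $\D$, the comparison map for $\C$; and directed colimits and pushouts in $\CD$ are objectwise. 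Finally, a projective cofibration in $\CD$ is in particular an objectwise cofibration in $\C$, so evaluating a diagram in $(\CD)^G$ whose underlying arrows in $\CD$ are cofibrations yields, at each object of $\D$, a diagram in $\C^G$ whose underlying arrows in $\C$ are cofibrations, and a leg $G/K\otimes f$ with $f$ a cofibration in $\CD$ evaluates to $G/K\otimes f(d)$ with $f(d)$ a cofibration in $\C$. Hence \ref{cellular1}, \ref{cellular2} and \ref{cellular3} for $(-)^H\colon(\CD)^G\to\CD$ reduce, object by object of $\D$, to the hypotheses for $(-)^H\colon\C^G\to\C$.

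For a left Bousfield localization $L_S\C$ of $\C$, I would simply note that $L_S\C$ has the same underlying category as $\C$, the same class of cofibrations, and the same tensoring over $\Set$; only the weak equivalences (enlarged) and the fibrations (restricted) change. Each of the three cellularity conditions for $(-)^H\colon\C^G\to\C$ is a statement purely about this unchanged data — directed colimits, pushouts, the functors $G/K\otimes-$ and $(-)^H$, the class of cofibrations, and the comparison map of \ref{cellular3} — so the conditions for $L_S\C$ are literally the same statements as for $\C$ and hold by hypothesis. I expect no genuine obstacle; the only points needing (routine) care are checking that $(-)^H$, the tensors $G/K\otimes-$ and $(G/K)^H\otimes-$, and the colimits appearing in \ref{cellular1} and \ref{cellular2} are all computed objectwise under $(\CD)^G\cong(\C^G)^{\D}$, together with the standard fact that projective cofibrations are objectwise cofibrations.
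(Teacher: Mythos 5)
Your proposal is correct and follows essentially the same route as the paper: for $\CD$ the argument is that projective cofibrations are objectwise cofibrations and that all the relevant (co)limits, tensors and fixed points are computed objectwise, and for a left Bousfield localization the cellularity conditions are literally unchanged since the underlying category and the cofibrations are the same. The paper's proof is just a terser version of what you wrote.
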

\begin{proof}
For $\C$ cofibrantly generated, any cofibration in $\CD$ with the projective model structure is a cofibration in each level. Thus the first claim follows since colimits and limits in $\CD$ are calculated objectwise. The second claim follows by definition, as a left Bousfield localization of $\C$ is a new model structure on the category $\C$ with the same cofibrations but more weak equivalences.
\end{proof}

 Let $\C_*$ denote the \emph{category of pointed objects in $\C$}, i.e., the category of objects under the terminal object. Let $(-)_+\colon \C\to \C_*$ be the functor that adds a disjoint base point. It is left adjoint to the underlying functor. Recall that $\C_*$ is a model category with cofibrations, weak equivalences and fibrations the maps that are so in $\C$.
\begin{lem}
\label{pointed}
If $\C^G$ admits the $\mathcal F$-model structure, then so does $(\C_*)^G$. If $\C^\OFop$ admits the projective model structure, then so does $(\C_*)^\OFop$. If $\mathcal F$ contains the trivial subgroup and if $i^*\colon\COFop \leftrightarrows \C^G\colon i_*$ is a Quillen equivalence, then so is $i^*\colon(\C_*)^\OFop \leftrightarrows (\C_*)^G\colon i_*$.
\end{lem}
\begin{proof}
The category of $G$-objects in the category of pointed objects $\C_*$ identifies with the category $(\C^G)_*$ of pointed $G$-objects. Under this identification, the $\mathcal F$-model structure on $(\C_*)^G$ corresponds to $(\C^G)_*$ with the underlying model structure from $\C^G$.

Similarly, the projective model structure on $(\C_*)^\OFop$ corresponds to $(\COFop)_*$ with the underlying model structure from $\COFop$.

Recall \cite[1.3.5]{hovey} that any Quillen pair $(F,U)$ induces a Quillen pair between the corresponding model categories of pointed objects. If the left adjoint $F$ preserves the terminal object and if $(F,U)$ is a Quillen equivalence, then the induced Quillen pair is a Quillen equivalence as well. Thus if $i^*\colon\COFop \leftrightarrows \C^G\colon i_*$ is a Quillen equivalence, then so is the induced adjunction $(\COFop)_* \leftrightarrows (\C^G)_*$. We conclude by noting that this induced adjunction identifies with $i^*\colon(\C_*)^\OFop \leftrightarrows (\C_*)^G\colon i_*$.
\end{proof}

We are now ready to see some examples.

Note that the $H$-fixed point functor $\Set^G\to \Set$ preserves directed colimits of diagrams where each arrow is a monomorphisms, preserves pushouts of diagrams where one leg is a monomorphism and for any $G$-sets $X$ and $Y$, the induced map $X^H\times Y^H\to (X\times Y)^H$ is an isomorphism, as limits commute. Hence if $\mathcal F$ contains the trivial subgroup, then Theorem~\ref{mainthm1} applies in the following example and Lemma~\ref{pointed} provides the pointed version.
 
\begin{ex}[Presheaf categories] Let $\C$ be the category of simplicial sets with the Quillen model structure, or any presheaf category with a cofibrantly generated model structure such that the generating cofibrations are monomorphisms. Then every cofibration of $\C$ is a monomorphism and thus the $H$-fixed point functor satisfies the cellularity conditions.
\end{ex}

If $\C$ is the category $\sSet$ of simplicial sets, then the cofibrations of $\sSet^G$ with the $\mathcal F$-model structure can be described explicitly using the following certainly known lemma. We denote the standard $n$-simplex by $\Delta[n]$, its boundary by $\partial \Delta[n]$ and write $\Sk_n\colon \sSet\to \sSet$ for the $n$-skeleton functor with the convention that $\Sk_{-1}X=\emptyset$ for all simplicial sets $X$.
\begin{lem}
\label{cofibrationsSetG}
Let $f\colon A\to B$ be a monomorphism in $\sSet^G$. For $n\geq 0$, denote the $G$-set of non-degenerate $n$-simplices of $B-f(A)$ by $e(B)_n$ and for every orbit of $e(B)_n/G$ choose a representative $x$. Then we obtain a pushout square
\[
\xymatrix{ \coprod_x G/G_x\otimes \partial\Delta[n]\ar[r]\ar[d] & A\cup \Sk_{n-1}B\ar[d] \\
\coprod_x G/G_x\otimes \Delta[n] \ar[r] & A\cup \Sk_nB,
}
\]
 where the lower horizontal map in a coset $gG_x$ is given by the $n$-simplex $gx$ of $\Sk_nB$.
\end{lem}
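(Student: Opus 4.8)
The plan is to reduce the equivariant statement to the classical relative skeletal filtration of simplicial sets, using that the forgetful functor $\sSet^G=\Fun(G,\sSet)\to\sSet$ creates colimits. First I would record the standard non-equivariant fact: for a sub-simplicial-set $A\subseteq B$, the simplicial set $A\cup\Sk_n B$ is the pushout of
\[
A\cup\Sk_{n-1}B \longleftarrow \coprod_{e(B)_n}\partial\Delta[n]\longrightarrow \coprod_{e(B)_n}\Delta[n],
\]
where $e(B)_n$ is the set of non-degenerate $n$-simplices of $B$ not lying in $A$, the copy indexed by $x$ maps to $B$ via the classifying map $x\colon\Delta[n]\to B$, and the left-hand leg is the restriction of this along $\partial\Delta[n]\hookrightarrow\Delta[n]$ (which lands in $\Sk_{n-1}B$ since $\partial\Delta[n]=\Sk_{n-1}\partial\Delta[n]$). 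This follows from the Eilenberg--Zilber lemma: every $m$-simplex of $A\cup\Sk_n B$ not already in $A\cup\Sk_{n-1}B$ is uniquely an iterated degeneracy of a unique $x\in e(B)_n$, and these correspond bijectively and compatibly with faces to the simplices of the $x$-indexed copy of $\Delta[n]$ that do not lie in its boundary; I would take this as known rather than reprove it.

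Second, I would promote this square to $\sSet^G$. Since $G$ acts on $B$ through simplicial automorphisms, it preserves non-degeneracy and the subobject $A$, so $e(B)_n$ is a $G$-set; it also preserves each skeleton, so $\Sk_j B$ and $A\cup\Sk_j B$ are $G$-subobjects of $B$, and $g$ carries the classifying map of $x$ to that of $gx$. Choosing orbit representatives $x$ for $e(B)_n/G$ and identifying $e(B)_n\cong\coprod_x G/G_x$ as $G$-sets, the underlying simplicial set of $\coprod_x G/G_x\otimes\Delta[n]$ is $\coprod_{e(B)_n}\Delta[n]$, equipped with the $G$-action that permutes the copies along the action on $e(B)_n$ and is the identity on each $\Delta[n]$; this is precisely the $G$-action inherited from $B$ that makes $\coprod_{e(B)_n}\Delta[n]\to B$ equivariant, and under it the map of the statement, sending the copy at a coset $gG_x$ to $gx$, is the equivariant refinement of the non-equivariant classifying map. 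The same identifications hold for $\partial\Delta[n]$ in place of $\Delta[n]$ and $A\cup\Sk_{n-1}B$ in place of $A\cup\Sk_n B$. Hence the displayed diagram is a commutative square in $\sSet^G$ whose underlying square in $\sSet$ is the classical pushout square recalled above.

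Finally, colimits in the functor category $\sSet^G$ are computed objectwise, hence created by the forgetful functor to $\sSet$; therefore a commutative square of $G$-simplicial sets whose underlying square in $\sSet$ is a pushout is itself a pushout, which completes the proof. The only point demanding care is the bookkeeping of the second step --- checking that the $G$-action on $\coprod_{e(B)_n}\Delta[n]$ inherited from $B$ is exactly ``permute the copies, identity on each simplex'', so that it agrees with the tensor $\coprod_x G/G_x\otimes\Delta[n]$, and that the chosen orbit representatives realize the lower horizontal map as described. I do not expect a genuine obstacle: the mathematical content is entirely the classical skeletal filtration together with the creation of colimits by $\sSet^G\to\sSet$.
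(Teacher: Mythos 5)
Your proposal is correct and follows essentially the same route as the paper: identify $G/G_x$ with the orbit $Gx$ via $gG_x\mapsto gx$, observe that the underlying square in $\sSet$ is the classical relative skeletal pushout, and conclude in $\sSet^G$ because this identification is equivariant and colimits there are computed on underlying simplicial sets. The paper compresses this into two sentences; your version merely spells out the bookkeeping.
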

\begin{proof}
The square is a pushout in $\sSet$ by the non-equivariant analogue as $G/G_x$ identifies with the orbit $Gx$ via $gG_x\mapsto gx$. Since this bijection is equivariant, we also have a pushout square in $\sSet^G$. 
\end{proof}

If $\mathcal F$ consists only of the trivial subgroup, then it is well-known that the cofibrant objects are the simplicial sets with a free $G$-action and more generally by \cite[2.2 (ii)]{drordwyerkan} that the cofibrations of $\sSet^G$ are the monomorphisms $f\colon A\to B$ such that that only the identity element $e\in G$ fixes simplices in $B-f(A)$.

The cofibrations of $\sSet^G$ with the $\mathcal F$-model structure for general $\mathcal F$ are as expected.
\begin{prop}
\label{gssetcofibration}
A map $f\colon A\to B$ in $\sSet^G$ with the $\mathcal F$-model structure is a cofibration if and only if $f$ is a monomorphism and for every $x\in B-f(A)$, the stabilizer $G_x$ is in $\mathcal F$.
\end{prop}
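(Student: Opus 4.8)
The plan is to use that, by Proposition~\ref{propFmodelstructure} and the usual description of cofibrations in a cofibrantly generated model category, the cofibrations of $\sSet^G$ with the $\mathcal F$-model structure are exactly the retracts of relative $I_{\mathcal F}$-cell complexes, where $I_{\mathcal F}=\{G/H\otimes(\partial\Delta[n]\hookrightarrow\Delta[n])\mid H\in\mathcal F,\ n\geq 0\}$, and to match this against the skeletal decomposition of Lemma~\ref{cofibrationsSetG}. Two elementary $G$-set facts do the bookkeeping: the simplex $(aH,\sigma)$ of $G/H\otimes\Delta[n]=\coprod_{G/H}\Delta[n]$ has stabilizer exactly $aHa^{-1}$, and if $G_x=aHa^{-1}$ then $gG_x\mapsto gaH$ defines a $G$-isomorphism $G/G_x\cong G/H$. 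Hence $G/G_x\otimes(\partial\Delta[n]\hookrightarrow\Delta[n])$ is isomorphic to a generator in $I_{\mathcal F}$ precisely when $G_x$ is conjugate to a subgroup in $\mathcal F$.

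For the ``if'' direction, let $f\colon A\to B$ be a monomorphism such that every simplex in $B-f(A)$ has stabilizer conjugate to a subgroup in $\mathcal F$. I would run the skeletal filtration $A=A\cup\Sk_{-1}B\to A\cup\Sk_0 B\to A\cup\Sk_1 B\to\cdots$, which has colimit $B$ since $f$ is a monomorphism. By Lemma~\ref{cofibrationsSetG} each stage $A\cup\Sk_{n-1}B\to A\cup\Sk_n B$ is a pushout of the coproduct $\coprod_x G/G_x\otimes(\partial\Delta[n]\hookrightarrow\Delta[n])$ over orbit representatives $x$ of the non-degenerate $n$-simplices of $B-f(A)$; since each such $G_x$ is conjugate to a member of $\mathcal F$, this coproduct is isomorphic to a coproduct of generators in $I_{\mathcal F}$. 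Thus $f$ is a transfinite composition of pushouts of coproducts of generating cofibrations, hence a cofibration.

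For the ``only if'' direction, let $f$ be a cofibration, so a retract of a relative $I_{\mathcal F}$-cell complex $j\colon A\to B'$. That $f$ is a monomorphism is immediate, since each generator of $I_{\mathcal F}$ is a levelwise injection of $G$-sets and monomorphisms in $\sSet^G$ are closed under coproducts, pushouts, transfinite composition and retracts. For the stabilizer condition I would argue by transfinite induction along the cell structure of $j$: pushing out $G/H\otimes(\partial\Delta[n]\hookrightarrow\Delta[n])$ adjoins, $G$-equivariantly, exactly the simplices of $\coprod_{G/H}\Delta[n]$ not in $\coprod_{G/H}\partial\Delta[n]$, whose stabilizers are the conjugates $aHa^{-1}$ of $H\in\mathcal F$; so every simplex of $B'$ outside $j(A)$ has stabilizer conjugate to a subgroup in $\mathcal F$. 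Finally, the lifting property of $f$ against the acyclic fibration $p\colon B'\to B$ with $pj=f$ yields a $G$-equivariant $\ell\colon B\to B'$ with $\ell f=j$ and $p\ell=\id_B$; for $x\in B-f(A)$ the simplex $\ell(x)$ lies outside $j(A)$ (otherwise $x=p\ell(x)\in f(A)$), and injectivity together with equivariance of $\ell$ give $G_x=G_{\ell(x)}$, which is therefore conjugate to a subgroup in $\mathcal F$.

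The closure properties of monomorphisms and of cofibrations, together with the two $G$-set computations, are routine. The step that needs genuine care --- the main obstacle --- is the identification of the ``fresh'' simplices, with their $G$-action and hence their stabilizers, across pushouts and transfinite compositions: one must make precise that pushing out along the monomorphism $G/H\otimes\partial\Delta[n]\to G/H\otimes\Delta[n]$ adjoins exactly the non-boundary simplices of each copy of $\Delta[n]$ with their evident action. This is what Lemma~\ref{cofibrationsSetG} packages for the forward direction, while for the backward direction the induction must pin it down directly.
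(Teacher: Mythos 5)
Your proof is correct and follows essentially the same route as the paper: the ``if'' direction is the same skeletal filtration via Lemma~\ref{cofibrationsSetG}, and the ``only if'' direction amounts to the paper's observation that the class of monomorphisms satisfying the isotropy condition contains the generating cofibrations and is closed under pushouts, transfinite composition and retracts. Your explicit lift $\ell$ with $p\ell=\id_B$ and the computation $G_x=G_{\ell(x)}$ is just a careful unpacking of the closure-under-retracts step that the paper states in one line.
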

\begin{proof}
Let $\mathcal K$ be the class of monomorphisms $f\colon A\to B$ satisfying the isotropy conditions of the statement.

We show that every cofibration of $\sSet^G$ is in $\mathcal K$. Generating cofibrations 
\[G/H\otimes \partial \Delta[n]\to G/H\otimes \Delta[n]\] 
are in $\mathcal K$, since the stabilizer of $gH\in G/H$ is $gHg^{-1}$. Note that morphisms of $\mathcal K$ are closed under taking pushouts, transfinite composition and retraction. Thus every cofibration is in $\mathcal K$.

Conversely, suppose that $f\colon A\to B$ is in $\mathcal K$. Since $f$ is the transfinite composition of
\[A\to \ldots \to A\cup \Sk_{n-1}B\to A\cup \Sk_nB\to \ldots ,
\]
it follows that $f$ is a cofibration in $\sSet^G$ by Lemma~\ref{cofibrationsSetG}.

\end{proof}

Let $\Gr$ denote the category of groups. For a group $A$ and set $X$, the copower $X\otimes A$ is the free product of copies of $A$ indexed by $X$. Note that 
\[(G/K\otimes A)^H\cong (G/K)^H\otimes A\] 
for any group $A$. Let $F\colon \Set \to \Gr$ denote the free group functor, and let $S\subset T$ be a subset inclusion. Then $(-)^H$ preserves the pushout of any diagram of the form
\[ G/K\otimes FT \leftarrow G/K\otimes FS \rightarrow X
\]
in $\Gr^G$. Indeed, the pushout is the coproduct $G/K\otimes F(T-S)\coprod X$ in $\Gr$. Observe that $(-)^H$ preserves binary coproducts. It follows that $(-)^H$ applied to the pushout yields 
\[((G/K)^H\otimes F(T-S))\coprod X^H\] 
as desired. Since the forgetful functor $\Gr\to \Set$ creates filtered colimits, we deduce that $(-)^H\colon \Gr^G\to \Gr$ preserves directed colimits of diagrams where each arrow is a monomorphism.

\begin{ex}[Simplicial groups]
Let $\C$ be the category of simplicial groups $\sGr$ with Quillen's model structure \cite[II p.~3.7]{quillen}, which is cofibrantly generated, as it can be obtained by transfer from the category of simplicial sets along the free group functor. Moreover, the pushout of a generating cofibration is a monomorphism in $\sGr$, and, as monomorphisms of groups are closed under transfinite composition and monomorphisms are closed under retraction, every cofibration in $\sGr$ is a monomorphism. Thus the cellularity condition \ref{cellular1} is satisfied and so is the cellularity condition \ref{cellular3}. The cellularity condition \ref{cellular2} holds by Proposition \ref{prop:cellular2}. We conclude that the category $\sGr^G$ admits the $\mathcal F$-model structure and is Quillen equivalent to the category of orbit diagrams $\sGr^\OFop$, if $\mathcal F$ contains the trivial subgroup. 
\end{ex}

\begin{ex}[Small categories and posets] If $\C$ is the category of small categories $\Cat$ with the Thomason model structure, then $\Cat^G$ satisfies the cellularity conditions by the work \cite{bohmannetal} of Bohmann et al.~ and thus $\Cat^G$ admits the $\mathcal F$-model structure and is Quillen equivalent to $\Cat^\OFop$, if $\mathcal F$ contains the trivial subgroup. Lemma~\ref{pointed} implies that also the category of equivariant pointed small categories $(\Cat_*)^G$ admits the $\mathcal F$-model structure and that the fixed point diagram functor $(\Cat_*)^G\to (\Cat_*)^\OFop$ is a Quillen equivalence if $\mathcal F$ contains the trivial subgroup.

Similarly, if $\C$ is the category of posets $\Pos$ with Raptis's model structure \cite{raptis}, then $\Pos^G$ satisfies the cellularity conditions by joint work \cite{maystephanzakharevich} with May and Zakharevich. Thus $\Pos^G$ and $(\Pos_*)
^G$ admit the $\mathcal{F}$-model structure and if $\mathcal F$ contains the trivial subgroup, they are Quillen equivalent to $\Pos^\OFop$ and $(\Pos_*)^\OFop$, respectively.
\end{ex}

Bergner applied Theorem~\ref{mainthm1} to various models for $(\infty,1)$-categories in \cite{bergnerequi}.
\subsection{Chain complexes}
\label{sec:chaincomplexes}
Let $R$ be a unitary ring. We show that if $\C$ is the category of chain complexes $\Ch(R)$ of left $R$-modules with the projective model structure, then the model structures on $\Ch(R)^\OFop$ and on $\Ch(R)^G$ exist, but the fixed point diagram functor is not a Quillen equivalence in general. The model structure on the category of orbit diagrams is useful for doing homological algebra of coefficient systems, i.e., of orbit diagrams of $R$-modules. It identifies with the projective model structure on the category of chain complexes in the abelian category of coefficient systems. The $\mathcal F$-model structure on equivariant chain complexes will be used to prove an equivariant Whitehead Theorem for collections.

Let $\mathcal F$ be a collection of subgroups of $G$ and $H\in \mathcal F$. Let $\RMod$ denote the category of left $R$-modules. Then the category of $G$-objects in $\RMod$ is the category of modules over the group ring $R[G]$ and the $H$-fixed points are the $H$-invariants. The cellularity condition \ref{cellular3} does not hold in general. Indeed, for any $R$-module $A$, the fixed point module $(G/K\otimes A)^H$ is $M_K\otimes A$, where $M_K\subset H\setminus(G/K)$ is the subset of finite $H$-orbits. Nevertheless, as for the category of groups, the $H$-fixed point functor preserves pushouts of the form
\[ G/K\otimes FT \leftarrow G/K\otimes FS \rightarrow X,
\]
where $F$ is the free $R$-module functor and $S\subset T$ a subset inclusion, and the $H$-fixed point functor preserves directed colimits of diagrams, where each arrow is a monomorphism.

\begin{ex}[Chain complexes]
\label{chaincomplexes}
Let $\Ch(R)$ be the category of non-negatively graded chain complexes of $R$-modules with the projective model structure \cite[7.2]{dwyerspa}, i.e., the weak equivalences are the quasi-isomorphisms, and the fibrations are the morphisms that are epimorphisms in degrees $n\geq 1$. Write $R\langle n\rangle$ for the chain complex given by $R$ concentrated in degree $n$. Set $D^0=R\langle 0\rangle$ and for $n\geq 1$, let $D^n$ denote the chain complex that contains $R$ in degree $n$ and in degree $n-1$, with differential the identity, and that is zero in the other degrees. Set $S^{-1}=0$ and for $n\geq 0$, let $S^{n}$ denote the chain complex $R\langle n\rangle$. Recall that $\Ch(R)$ is cofibrantly generated with generating cofibrations $\{S^{n-1} \to D^n\}_{n\geq 0}$ and generating acyclic cofibrations $\{0\to D^n\}_{n\geq 1}$. By Remark \ref{rem:cellular} and Proposition \ref{prop:cellular2}, the category $\Ch(R)^G$ admits the $\mathcal F$-model structure, but the Quillen pair $i^*\colon\Ch(R)^\OFop\leftrightarrows\Ch(R)^G\colon i_*$ defined for $\mathcal F$ containing $\{e\}$ is not a Quillen equivalence in general. Indeed, by the proof of Theorem~\ref{mainthm1}, the unit $\eta_T\colon T\to i_*i^* T$ would have to be a weak equivalence for cofibrant $T$, which in general is not the case for $T=\OFop(G/\{e\},-)\otimes R\langle 0\rangle$. For instance, if $G\in \F$ and $G$ is not the trivial group, then $T(G/G)=0$, whereas $(i_*i^* T)(G/G)\cong R\langle 0\rangle$.
\end{ex}

The category $\Ch(R)^\OFop$ identifies with the category of chain complexes in the abelian category of diagrams $\OFop\to \RMod$, i.e., of coefficient systems. Under the above identification, the projective model structure on $\Ch(R)^\OFop$ is just the projective model structure on the category of chain complexes in $\RMod^\OFop$. 

The category $\Ch(R)^G$ identifies with the category of chain complexes of $R[G]$-modules. We use the $\mathcal F$-model structure on $\Ch(R)^G$ and on $\sSet^G$ for a conceptual proof of an equivariant Whitehead Theorem on chain complexes for collections. Kropholler and Wall \cite[1.2]{krophollerwall} showed that an equivariant map $f\colon X\to Y$ between $G$-CW complexes induces an equivariant homotopy equivalence $C(f;R)$ on augmented cellular chain complexes provided that $C(f^H;R)$ is a homotopy equivalence for all subgroups $H$ of $G$. Their proof also works for the unaugmented cellular chain complexes and the equivariant Whitehead Theorem for cellular chain complexes of $G$-CW complexes implies a version for the normalized chain complexes of simplicial $G$-sets. We will provide a conceptual proof of this simplicial set version and generalize it to collections, showing that for $f\colon X\to Y$ in $\sSet^G$ it is enough to check that the map on normalized chain complexes $C(f^H;R)$ is a homotopy equivalence for all subgroups $H$ that appear as isotropy groups of $X$ and $Y$.

Equip the category of $G$-objects in $\Ch(R)$ and the category of $G$-simplicial sets with the $\mathcal F$-model structure. Let $C(-;R)\colon \sSet\to \Ch(R)$ denote the normalized chain complex functor. Since it is a left Quillen functor, so is the induced functor on $G$-objects. Indeed, its right adjoint $\Ch(R)^G\to \sSet^G$ is right Quillen since it commutes with the fixed point functors. Together with Proposition \ref{gssetcofibration}, it follows that $C(X;R)$ is cofibrant under the hypotheses of the following lemma. Moreover, as any object $Z$ of $\Ch(R)^G$ is fibrant, it makes sense to speak about homotopic maps between $C(X;R)$ and $Z$.

\begin{lem} 
\label{chainhomotopic}
Let $X$ be a simplicial $G$-set such that all isotropy groups of simplices of $X$ belong to $\mathcal F$. Let $f,g\colon C(X;R)\to Z$ be maps in $\Ch(R)^G$ from the normalized chain complex $C(X;R)$ to an equivariant chain complex $Z$. If $f$ and $g$ are homotopic in the model category theoretical sense, then $f$ and $g$ are $G$-equivariantly chain homotopic.
\end{lem}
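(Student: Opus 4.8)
The plan is to reduce model-categorical homotopy of maps out of $C(X;R)$ to genuine $G$-equivariant chain homotopy by finding an explicit cylinder object. Since $C(X;R)$ is cofibrant (by the discussion preceding the lemma, using Proposition \ref{gssetcofibration} and that $C(-;R)$ induces a left Quillen functor on $G$-objects) and every object of $\Ch(R)^G$ is fibrant, two maps $f,g\colon C(X;R)\to Z$ are left homotopic if and only if they are right homotopic, and left homotopy can be detected using \emph{any} cylinder object on $C(X;R)$. So the first step is: exhibit a convenient cylinder object $\mathrm{Cyl}(C(X;R))$ for which a left homotopy $f,g\colon C(X;R)\to Z$ unwinds, by direct inspection, into exactly the data of a $G$-equivariant chain homotopy $s$ with $ds+sd = g-f$.

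The natural candidate is the algebraic mapping cylinder built from the interval: apply $C(-;R)$ to the simplicial $G$-set $X\times \Delta[1]$ (with trivial $G$-action on $\Delta[1]$), i.e.\ consider $C(X\times\Delta[1];R)$ together with the two inclusions induced by the vertices $\Delta[0]\rightrightarrows\Delta[1]$ and the projection $X\times\Delta[1]\to X$. First I would check that
\[
C(X;R)\amalg C(X;R)\longrightarrow C(X\times\Delta[1];R)\longrightarrow C(X;R)
\]
is a cylinder object in $\Ch(R)^G$: the composite is the fold map; the second map is a weak equivalence because $X\times\Delta[1]\to X$ is an equivariant homotopy equivalence of simplicial $G$-sets (hence a levelwise weak equivalence of fixed-point simplicial sets, hence $C(-;R)$ of it is a levelwise quasi-isomorphism); and the first map is a cofibration in $\sSet^G$ with the $\F$-model structure because $X\amalg X\to X\times\Delta[1]$ is a monomorphism whose complement has simplices with the same isotropy groups as those of $X$ (so Proposition \ref{gssetcofibration} applies), and $C(-;R)$ on $G$-objects is left Quillen, hence sends this cofibration to a cofibration. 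Alternatively, one can use the Eilenberg--Zilber / algebraic tensor $C(X;R)\otimes_R C(\Delta[1];R)$, which is $G$-equivariantly chain isomorphic to $C(X\times\Delta[1];R)$; I would work with whichever makes the bookkeeping cleanest.

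The second step is purely algebraic: given a left homotopy $K\colon C(X\times\Delta[1];R)\to Z$ in $\Ch(R)^G$ with $K\circ i_0 = f$ and $K\circ i_1 = g$, extract the chain homotopy. Write $C(\Delta[1];R)$ explicitly — it has $R$ in degrees $0$ and $1$, generated by the two vertices $v_0,v_1$ and the edge $e$ with $de = v_1-v_0$ — so under an Eilenberg--Zilber identification a generator of $C(X\times\Delta[1];R)$ in degree $n+1$ coming from $x\otimes e$ with $\dim x = n$ gives, upon applying $K$, an element of $Z_{n+1}$; the assignment $x\mapsto K(x\otimes e)$ is $R$-linear, $G$-equivariant (because $K$ and the inclusion of the $e$-summand are), and the differential identity for $K$ together with $de = v_1-v_0$ forces $dK(x\otimes e) + K((dx)\otimes e) = K(x\otimes v_1) - K(x\otimes v_0) = g(x)-f(x)$, i.e.\ $s\colon x\mapsto \pm K(x\otimes e)$ is a $G$-equivariant chain homotopy from $f$ to $g$ (up to fixing signs coming from the Koszul conventions in the shuffle map). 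I would conclude by noting this construction visibly lands in equivariant maps, so $f$ and $g$ are $G$-equivariantly chain homotopic.

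The main obstacle I anticipate is not any deep point but the verification that the proposed object really is a cylinder object in the $\F$-model structure — specifically that the cofibration half holds, which is where the isotropy condition in the hypothesis ($X$ has all isotropy groups in $\F$) and Proposition \ref{gssetcofibration} are genuinely used, together with the fact that $C(-;R)\colon \sSet^G\to\Ch(R)^G$ preserves cofibrations. A secondary, purely clerical nuisance is keeping track of signs and of the Eilenberg--Zilber shuffle/Alexander--Whitney maps when passing between $C(X\times\Delta[1];R)$ and $C(X;R)\otimes_R C(\Delta[1];R)$; since we only need the \emph{existence} of a chain homotopy, these signs can be absorbed and do not affect the conclusion.
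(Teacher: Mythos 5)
Your proposal is correct and follows essentially the same route as the paper: exhibit $C(X\times\Delta[1];R)$ as a cylinder object by applying the left Quillen functor $C(-;R)$ to the cylinder $X\amalg X\to X\times\Delta[1]\to X$ in $\sSet^G$ (using the isotropy hypothesis and Proposition \ref{gssetcofibration} for the cofibration half), then precompose the resulting homotopy with the Eilenberg--Zilber map and read off the degree-shifting component $x\mapsto \pm H(\nabla(x\otimes e))$ as the equivariant chain homotopy. The paper's proof records the signs explicitly as $\{(-1)^n\varphi_n\}$, but otherwise your argument matches it step for step.
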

\begin{proof}
Note that the usual composite  $X\coprod X\to X\times \Delta[1]\to X$ is a factorization of the fold map into a cofibration followed by a weak equivalence in $\sSet^G$ with the $\mathcal F$-model structure, i.e., a cylinder object. Application of the left Quillen functor $C(-;R)$ yields a cylinder object $C(X\times \Delta[1];R)$ in $\Ch(R)^G$, as $X$ is cofibrant. By assumption, there exists a homotopy $H\colon C(X\times \Delta[1];R)\to Z$ from $f$ to $g$. Now, consider the composite 
\[
C(X;R)\otimes_RC(\Delta[1];R)\stackrel{\nabla}{\rightarrow} C(X\times \Delta[1];R)\stackrel{H}{\rightarrow} Z
\]
of the Eilenberg-Zilber map \cite[(5.18)]{eilenbergmaclane} with $H$. Recall that the normalized chain complex $C(\Delta[1];R)$ contains $R$ in degree $1$, two copies of $R$ in degree $0$ and is zero in the other degrees. Thus $C(X;R)\otimes_R C(\Delta[1];R)$ in degree $n+1$ identifies with

\[C_{n+1}(X;R)\oplus C_{n+1}(X;R) \oplus C_{n}(X;R).\]

Let $\varphi_{n}\colon C_{n}(X;R)\to Z_{n+1}$ be the restriction of the composite $H\circ \nabla$ to $C_{n}(X;R)$. Then a straightforward computation shows that $\{(-1)^n \varphi_n\}_{n\geq 0}$ is an equivariant chain homotopy between $f$ and $g$.
 \end{proof}

We apply the Whitehead Theorem for model categories \cite[4.24]{dwyerspa} to obtain the following equivariant Whitehead Theorem for collections.
\begin{thm}
\label{whiteheadchain}
Let $G$ be a group and $\mathcal F$ a collection of subgroups of $G$, let $R$ be a unitary ring. Let $f\colon X\to Y$ be a map between simplicial $G$-sets such that all isotropy groups of simplices of $X$ and $Y$ belong to $\mathcal F$. The map $C(f;R)\colon C(X;R)\to C(Y;R)$ between normalized chain complexes is an equivariant homotopy equivalence in each of the following two situations.
\begin{enumerate}[a)]
\item
\label{whiteheadchaina}
 The induced map $C(X;R)^H\to C(Y;R)^H$ is a quasi-isomorphism for all $H\in\mathcal F$.
\item 
\label{whiteheadchainb}
The induced map $C(X^H;R)\to C(Y^H;R)$ is a quasi-isomorphism for all $H\in \mathcal F$.
\end{enumerate}
\end{thm}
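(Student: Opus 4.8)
The plan is to extract both statements from the Whitehead Theorem for model categories \cite[4.24]{dwyerspa}, applied to $\Ch(R)^G$ with its $\mathcal F$-model structure, combined with Lemma~\ref{chainhomotopic}, which converts model-theoretic homotopies into $G$-equivariant chain homotopies. We already have the needed input: since all isotropy groups of simplices of $X$ and $Y$ lie in $\mathcal F$, Proposition~\ref{gssetcofibration} makes $X$ and $Y$ cofibrant in $\sSet^G$, hence $C(X;R)$ and $C(Y;R)$ cofibrant in $\Ch(R)^G$ ($C(-;R)$ being left Quillen), and they are fibrant because every object of $\Ch(R)^G$ is.

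For part~\ref{whiteheadchaina}, the hypothesis is, by the definition of the $\mathcal F$-model structure, exactly the assertion that $C(f;R)$ is a weak equivalence in $\Ch(R)^G$; its source and target being cofibrant and fibrant, \cite[4.24]{dwyerspa} furnishes a model-theoretic homotopy inverse $h\colon C(Y;R)\to C(X;R)$, so that $h\,C(f;R)$ is homotopic to $\id_{C(X;R)}$ and $C(f;R)\,h$ to $\id_{C(Y;R)}$. Applying Lemma~\ref{chainhomotopic} to these two pairs of maps $C(X;R)\to C(X;R)$ and $C(Y;R)\to C(Y;R)$ — legitimate because $X$ and $Y$ have isotropy in $\mathcal F$ — turns the model-theoretic homotopies into $G$-equivariant chain homotopies, so $h$ is an equivariant chain-homotopy inverse of $C(f;R)$.

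For part~\ref{whiteheadchainb}, I would reduce to part~\ref{whiteheadchaina}, i.e.\ show that the present hypothesis already forces $C(f;R)^H$ to be a quasi-isomorphism for every $H\in\mathcal F$; the route is through orbit diagrams (so one should take $\{e\}\in\mathcal F$, as throughout this discussion). Consider the fixed-point diagram $i_*X\in\sSet^{\OFop}$, $G/H\mapsto X^H$. Running the skeletal $I_\F$-cell filtration of $X$ given by Lemma~\ref{cofibrationsSetG} through the fixed-point functors $(-)^H$ and using that the cellularity conditions hold for $\sSet^G$ — as in the proofs of Proposition~\ref{propFmodelstructure} and Theorem~\ref{mainthm1} — exhibits $i_*X$, and likewise $i_*Y$, as a cell complex and thus a cofibrant object of $\sSet^{\OFop}$ with the projective model structure. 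Applying $C(-;R)$ levelwise, which is left Quillen $\sSet^{\OFop}\to\Ch(R)^{\OFop}$ for the projective model structures, we get cofibrant objects $G/H\mapsto C(X^H;R)$ and $G/H\mapsto C(Y^H;R)$ of $\Ch(R)^{\OFop}$, and the hypothesis of part~\ref{whiteheadchainb} says precisely that the map between them is a levelwise quasi-isomorphism, hence a weak equivalence. Since $i^*\colon\Ch(R)^{\OFop}\to\Ch(R)^G$ is left Quillen (Lemma~\ref{quillenpair}), it preserves weak equivalences between cofibrant objects; as $i^*$ sends $G/H\mapsto C(X^H;R)$ to $C(X;R)$, sends $G/H\mapsto C(Y^H;R)$ to $C(Y;R)$, and sends the map between them to $C(f;R)$, we conclude that $C(f;R)$ is a weak equivalence in $\Ch(R)^G$, and part~\ref{whiteheadchaina} finishes the proof.

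I expect the cofibrancy of the fixed-point diagram $i_*X$ in $\sSet^{\OFop}$ to be the only substantial point, and it is exactly where the good behaviour of fixed points on $G$-simplicial sets — the cellularity conditions — is indispensable. This is precisely the ingredient that $\Ch(R)$ lacks; indeed the natural map $C(X^H;R)\to C(X;R)^H$ need not even be a quasi-isomorphism, so part~\ref{whiteheadchainb} genuinely has to be routed through $\sSet^{\OFop}$ and cannot be argued directly inside $\Ch(R)^G$.
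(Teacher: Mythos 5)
Your part~\ref{whiteheadchaina} is essentially the paper's argument: cofibrancy of $C(X;R)$ and $C(Y;R)$ via Proposition~\ref{gssetcofibration} and the left Quillen functor $C(-;R)$, automatic fibrancy, then the model-categorical Whitehead Theorem followed by Lemma~\ref{chainhomotopic}.

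For part~\ref{whiteheadchainb} you take a genuinely different route. The paper stays inside $G$-objects: it passes to the left Bousfield localization $L_R\sSet$ of $\sSet$ at the $H_*(-;R)$-isomorphisms, notes that $(L_R\sSet)^G$ admits the $\mathcal F$-model structure and that $C(-;R)\colon (L_R\sSet)^G\to \Ch(R)^G$ is still left Quillen, so that the hypothesis of~\ref{whiteheadchainb} --- which says exactly that $f$ is an $\mathcal F$-weak equivalence between cofibrant objects of $(L_R\sSet)^G$ --- is carried to the hypothesis of~\ref{whiteheadchaina}, since left Quillen functors preserve weak equivalences between cofibrant objects. You instead go through orbit diagrams: the cellularity conditions for $\sSet$ turn the skeletal $I_{\mathcal F}$-cell filtration of $X$ into an $I_{\OF}$-cell filtration of $i_*X$, levelwise $C(-;R)$ is left Quillen for the projective structures, and $i^*$ carries the resulting levelwise quasi-isomorphism between cofibrant objects of $\Ch(R)^\OFop$ back to $C(f;R)$. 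Both reductions to~\ref{whiteheadchaina} are sound. The paper's route buys generality and economy: it needs no cell-structure analysis and no assumption relating $\mathcal F$ to the trivial subgroup, at the price of invoking the existence and cofibrant generation of $L_R\sSet$. Your route buys self-containedness, reusing only the machinery of §\ref{sec:discrete} and avoiding Bousfield localization, but --- as you yourself flag --- it requires $\{e\}\in\mathcal F$ for the functor $i\colon G\to \OFop$ to exist, an assumption the theorem does not make; and since the hypothesis of~\ref{whiteheadchainb} for $\mathcal F\cup\{\{e\}\}$ is strictly stronger than for $\mathcal F$ (it would additionally demand that $C(f;R)$ itself be a quasi-isomorphism), the missing case cannot be recovered by simply enlarging $\mathcal F$. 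So your argument proves the theorem only under this extra hypothesis. Your closing observation that $C(X^H;R)\to C(X;R)^H$ need not be a quasi-isomorphism is correct and is indeed the reason~\ref{whiteheadchainb} cannot be deduced from~\ref{whiteheadchaina} by an argument internal to $\Ch(R)^G$.
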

\begin{proof}
By assumption, the simplicial $G$-sets $X$ and $Y$ are cofibrant in $\sSet^G$ with the $\mathcal F$-model structure and thus $C(X;R)$ and $C(Y;R)$ are cofibrant in $\Ch(R)^G$ with the $\mathcal F$-model structure. Moreover, the chain complexes $C(X;R)$ and $C(Y;R)$ are also fibrant, since every object in $\Ch(R)^G$ is fibrant.

Thus in situation \ref{whiteheadchaina}, the map $C(f;R)$ is a homotopy equivalence in the model category theoretical sense by the Whitehead Theorem for model categories and thus an equivariant homotopy equivalence by Lemma~\ref{chainhomotopic}.

 In situation \ref{whiteheadchainb}, let $L_R\sSet$ denote the left Bousfield localization \cite[10.2]{bousfieldloc} of the category of simplicial sets with respect to the class of $H_*(-;R)$-isomorphisms. Recall or deduce directly from Bousfield's proof of \cite[10.2]{bousfieldloc} that $L_R\sSet$ is cofibrantly generated. Thus $(L_R\sSet)^G$ admits the $\mathcal F$-model structure. Since the normalized chain complex functor $C(-;R)\colon L_R\sSet \to \Ch(R)$ is a left Quillen functor, the induced functor from $(L_R\sSet)^G$ to $\Ch(R)^G$ with the $\mathcal F$-model structures is again left Quillen. Hence, the functor $C(-;R)$ takes the weak equivalence $f$ between cofibrant objects in $(L_R\sSet)^G$ to a weak equivalence, and the proof reduces to the already shown case \ref{whiteheadchaina}. 
\end{proof}

\section{The topological setting}
\label{sec:topological}
We work henceforth in the category of weak Hausdorff $k$-spaces $\U$ (see \cite[Appendix A]{lewis}). In particular, a map $i\colon X\to Y$ in $\U$ is called an \emph{inclusion} if it is a homeomorphism from $X$ onto its image $i(X)$ with the $k$-subspace topology, i.e., the $k$-ification of the ordinary subspace topology. 

In \S\ref{sec:Topological_model_categories}, we recall the notions of topological model categories and topological categories, i.e., categories  enriched \cite{kelly} in $\U$. Enrichment in $\U$ is particularly simple. For instance $\U$-natural transformations between $\U$-functors are just ordinary natural transformations and thus any adjunction between $\U$-functors is automatically a $\U$-adjunction.

In \S\ref{sec:Equivariant_homotopy_theory_for_topological_model_categories}, the three questions of the introduction regarding the existence of the $\mathcal F$-model structure, of the projective model structure and their comparison are studied in the topological setting. The analogue of Theorem \ref{mainthm1} holds for compact Lie groups $G$. For a general topological group $G$ some extra work, depending on the particular example of topological model category $\C$ considered, is required to show the existence of the projective model structure and of the $\mathcal F$-model structure. We will see in \S\ref{sec:topologicaldiagrams} a positive answer to the three questions for any topological group $G$, when $\C$ is the category of topological diagrams $\UD$ indexed by any small topological category $\D$ with the projective model structure.

\subsection{Topological model categories}
\label{sec:Topological_model_categories}

Let $\U$ denote the cartesian closed category of weak Hausdorff $k$-spaces equipped with the Quillen model structure. We write $\Uu(X,Y)$ for the internal hom of $X$ and $Y$. A \emph{topological model category} $\C$, is a category enriched, tensored and cotensored over the cartesian closed category $\U$, whose underlying category is equipped with a model category structure in a compatible way. This means first of all that $\C$ is a \emph{topological category}: each hom-set $\C(X,Y)$ is topologized as a space $\Cu(X,Y)\in \U$, such that composition is continuous. Moreover, $\C$ is equipped with a functor
$\otimes \colon \U\times \C\to \C$, called \emph{tensor}, and with a functor $\pitchfork\colon \U^\op\times \C\to \C$, called \emph{cotensor}, together with natural isomorphisms 
\[\Cu(X\otimes A,B)\cong \Uu(X,\Cu(A,B))\cong \Cu(A, X\pitchfork B).\] 
Finally, the category $\C$ is equipped with a model category structure such that the \emph{pushout-product axiom} holds: for any cofibration $f\colon X\to Y$ in $\U$ and any cofibration $i\colon A\to B$ in $\C$, the induced map from the pushout $Y\otimes A\cup_{X\otimes A} X\otimes B$ to $Y\otimes B$ is a cofibration in $\C$, which is acyclic if $f$ or $i$ is acyclic.

\begin{ex}[{\cite[4.2.11]{hovey}}]
The category of spaces $\U$ is a topological model category with tensor the cartesian product and cotensor the internal hom $\Uu(-,-)$.
\end{ex}

A functor $F\colon \D \to \E$ between topological categories is a \emph{$\U$-functor} if 
\[\Du(A,B)\to \Eu(FA,FB)\]
 is continuous for all objects $A$ and $B$ of $\D$. 
\begin{ex} If $\D$ and $\E$ are topological categories, then so are $\D^\op$ and $\D\times \E$. The functor $\Du\colon \D^{op}\times \D\to \U$ is a $\U$-functor and if $\D$ is tensored and cotensored, then $\otimes$ and $\pitchfork$ are also $\U$-functors.  
\end{ex} 

For any small, topological category $\D$, we write $\CD$ for the category of \mbox{$\U$-functors} from $\D$ to $\C$. Note that $\CD$ is again a topological category, where the space $\CDu(F,G)$ of natural transformations is a subspace of $\prod_{d\in\D}\Cu(F(d),G(d))$ in $\U$. Moreover, the category $\CD$ is tensored and cotensored with tensor $X\otimes F$ of $X\in \U$ and $F\in \CD$ the composite
$\D\stackrel{F}{\rightarrow} \C \stackrel{X\otimes -}{\rightarrow} \C$
and cotensor $X\pitchfork F$ the composite
$\D\stackrel{F}{\rightarrow} \C \stackrel{X\pitchfork -}{\rightarrow} \C$.

We are interested in the existence of the \emph{projective model structure} on $\CD$, where the weak equivalences and fibrations are defined levelwise. The case $\C=\U$ is due to Piacenza \cite[5.4]{piacenza}. We give a different proof.

\begin{prop} 
\label{projectivemodelstructure}
Consider the category of $\U$-functors $\CD$ from a small topological category $\D$ to a topological model category $\C$. Suppose that $\C$ is cofibrantly generated. The category $\CD$ admits the projective model structure and is a topological model category in each of the following two situations.
\begin{enumerate}[a)]
\item
\label{projective_assumption1}
For any two objects $d,d'$ of $\D$, the functor $\Du(d,d')\otimes -\colon \C\to\C$ preserves cofibrations and acyclic cofibrations.
\item
\label{projective_assumption2}
The model category $\C$ is the category $\U$.
\end{enumerate}
\end{prop}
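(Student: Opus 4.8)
The plan is to obtain the projective model structure by transferring the cofibrantly generated structure on $\C$ along the \emph{set} of adjunctions $\{F_d\dashv\ev_d\}_{d\in\D}$, where $\ev_d\colon\CD\to\C$ is evaluation at $d$ and $F_d=\Du(d,-)\otimes-\colon\C\to\CD$ is the free diagram functor, $F_d(C)(d')=\Du(d,d')\otimes C$. The enriched Yoneda lemma gives $\CDu(F_dC,G)\cong\Cu(C,G(d))$ naturally in $C$ and $G$, from which one reads off that $\ev_d$ has both adjoints (hence preserves all limits and colimits) and that $S\otimes F_dC\cong F_d(S\otimes C)$ for $S\in\U$. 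First I would record that $\CD$ is complete and cocomplete with (co)limits formed pointwise, so Transfer Theorem~\ref{transport} applies in principle with candidate generating sets $I_\D=\bigcup_{d}F_dI$ and $J_\D=\bigcup_{d}F_dJ$, where $I$, $J$ are generating (acyclic) cofibrations of $\C$. Since each $\ev_{d'}$ commutes with the colimits used in the small object argument, the domains of $I_\D$ and $J_\D$ are small, and via Lemma~\ref{lemtransport1} the whole verification reduces to the acyclicity condition: each $\ev_{d'}$ must send every relative $J_\D$-cell complex to a weak equivalence of $\C$. The point is that $\ev_{d'}$ turns such a cell complex into a transfinite composite of pushouts of coproducts of maps of the form $\Du(d,d')\otimes j$ with $j\in J$.

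In situation~\ref{projective_assumption1} this is immediate: by hypothesis $\Du(d,d')\otimes-$ preserves acyclic cofibrations, so each $\Du(d,d')\otimes j$ is an acyclic cofibration of $\C$, and acyclic cofibrations are closed under coproducts, pushouts and transfinite composition; hence $\ev_{d'}$ carries relative $J_\D$-cell complexes to acyclic cofibrations, in particular weak equivalences. The same hypothesis applied to cofibrations shows, incidentally, that $\ev_{d'}$ carries relative $I_\D$-cell complexes to cofibrations of $\C$, so the projective cofibrations are levelwise cofibrations.

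In situation~\ref{projective_assumption2}, with $\C=\U$, one cannot expect $\Du(d,d')\times-$ to preserve Quillen cofibrations, so I would instead choose the generating acyclic cofibrations $j\in J$ of $\U$ to be closed inclusions that are strong deformation retracts, for instance the inclusions $D^n\times\{0\}\hookrightarrow D^n\times[0,1]$. For a fixed space $X=\Du(d,d')$ the map $X\times j$ is then again a closed inclusion which is a strong deformation retract, in particular a closed Hurewicz cofibration and a homotopy equivalence. Now I invoke standard point-set facts in $\U$: a pushout of a closed Hurewicz cofibration along an arbitrary map is again a closed Hurewicz cofibration; the gluing lemma shows that a pushout of a closed Hurewicz cofibration which is a homotopy equivalence is again a homotopy equivalence; and a transfinite composite of closed Hurewicz cofibrations which are homotopy equivalences is again one. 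Hence $\ev_{d'}$ sends relative $J_\D$-cell complexes to homotopy equivalences, a fortiori weak equivalences, and the transfer applies.

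Finally, to see that $\CD$ with the projective structure is a topological model category I would verify the pushout-product axiom, where the tensor and cotensor of $\CD$ over $\U$ are the pointwise ones. It suffices to check it when the $\CD$-cofibration is a generating cofibration $F_df$: the general case follows because, for fixed $p$, the functor $p\square-$ (and likewise $-\square i$) preserves colimits and retracts, and acyclic cofibrations of $\CD$ are retracts of relative $J_\D$-cell complexes. For a cofibration $p\colon S\to T$ in $\U$ one has $p\square F_d(f)\cong F_d(p\square f)$ by the identity $S\otimes F_dC\cong F_d(S\otimes C)$; since $\C$ is topological, $p\square f$ is a cofibration of $\C$, acyclic if $p$ or $f$ is, and since $F_d$ is a left Quillen functor (its right adjoint $\ev_d$ preserves fibrations and acyclic fibrations by definition of the projective structure) it sends this to a projective cofibration of $\CD$ with the same acyclicity. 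I expect the only real obstacle to be the acyclicity condition in situation~\ref{projective_assumption2}: without the preservation hypothesis of~\ref{projective_assumption1} one must descend to point-set topology and run the Hurewicz-cofibration gluing and telescope arguments with a suitable choice of generating acyclic cofibrations; everything else is formal bookkeeping about the free--evaluation adjunctions and the transfer theorem.
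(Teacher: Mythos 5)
Your proposal is correct and, in outline, is the paper's proof: both transfer the model structure from $\C$ along the set of adjunctions $\{\Du(d,-)\otimes-\dashv\ev_d\}_{d\in\D}$ using Theorem \ref{transport}, and your treatment of situation \ref{projective_assumption1} is exactly the paper's application of Lemma \ref{lemtransport1}. Where you genuinely diverge is situation \ref{projective_assumption2}. The paper verifies the acyclicity condition ii) of Theorem \ref{transport} with Quillen's path object argument (Lemma \ref{lemtransport2}): every object of $\UD$ is fibrant and the levelwise cotensor $[I,X]$ with the unit interval is a path object. You instead choose generating acyclic cofibrations of $\U$ that are closed strong deformation retract inclusions and push the retraction through products, pushouts and transfinite composites; this classical argument works and yields the stronger conclusion that evaluated $J$-cells are homotopy equivalences, at the cost of more point-set topology. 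One genuine imprecision: in situation \ref{projective_assumption2} you cannot invoke Lemma \ref{lemtransport1} to ``reduce everything to acyclicity,'' because that lemma also demands that the evaluations take relative cell complexes to \emph{cofibrations} of $\C$ --- precisely what fails for $\U$ --- so the smallness hypothesis i) of Theorem \ref{transport} must be checked by hand. The paper does this by observing that the evaluated cell complexes are closed inclusions (closed under products, pushouts and transfinite composition) and that every space in $\U$ is small relative to inclusions; your Hurewicz-cofibration analysis supplies exactly these closure facts, so the gap is easily filled, but as written your smallness claim is unjustified in case \ref{projective_assumption2}. Finally, for the pushout-product axiom the paper checks the adjoint formulation with the cotensor, which is immediate because fibrations, weak equivalences and cotensors in $\CD$ are all levelwise; your direct reduction to generating cofibrations via $p\square F_d(f)\cong F_d(p\square f)$ is also valid but requires the closure of $p\square(-)$ under the cell-complex operations, which you correctly note.
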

\begin{proof}
Since $\C$ is cotensored and cocomplete, so is $\CD$. Indeed, for a diagram $F$ in $\CD$, let $\colim F$ be the colimit of $F$ calculated in the category of functors from $\C$ to $\D$. Using that $\C$ is cotensored, one checks that $\colim F$ is a $\U$-functor and thus the colimit of $F$ in $\CD$. Similarly, since $\C$ is tensored and complete, so is $\CD$.

Note that for any object $d$ of $\D$, the evaluation functor $\ev_d\colon \CD\to \C$ is right adjoint to the functor $\Du(d,-)\otimes -$ that sends an object $A$ of $\C$ to the composite
\[\D\stackrel{\Du(d,-)}{\longrightarrow}\U\stackrel{-\otimes A}{\longrightarrow} \C.
\]
In both situations, the projective model structure on $\CD$ is obtained by the Transfer Theorem \ref{transport} applied to the right adjoints $\{\ev_d\}_{d\in\D}$. 

In situation \ref{projective_assumption1}, the assumptions hold by Lemma~\ref{lemtransport1}. Indeed, every evaluation functor preserves all colimits. Moreover, a transfinite composition of pushouts of elements of the form $\Du(d,-)\otimes f$ with $d\in \D$ and $f$ a generating cofibration of $\C$ evaluated in an object $d'\in \D$ is a transfinite composition of pushouts of cofibrations in $\C$ and thus a cofibration itself. The same argument works for acyclic cofibrations. 

In situation \ref{projective_assumption2}, the smallness hypotheses reduce to the fact that every space is small with respect to the inclusions using that closed inclusions are closed under taking products, pushouts and transfinite compositions. The second condition holds by Lemma~\ref{lemtransport2}. Indeed, every object $X$ of $\UD$ is fibrant and the cotensor $I\pitchfork X$ of the unit interval $I\in \U$ and $X$ is a path object.

We have shown the existence of the projective model structure on $\CD$. Finally, the pushout-product axiom is equivalent to the following condition \cite[4.2.2]{hovey}: for any cofibration $i\colon A\to B$ in $\U$ and any fibration $p\colon X\to Y$ in $\CD$, the induced map from the cotensor $B\pitchfork X$ to the pullback $B\pitchfork Y\times_{A\pitchfork Y}A\pitchfork X$ is a fibration in $\CD$, which is acyclic if $f$ or $p$ is acyclic. But this holds by the pushout-product axiom in $\C$. Thus $\CD$ is a topological model category.
 \end{proof}

\subsection{Equivariant homotopy theory for topological model categories}
\label{sec:Equivariant_homotopy_theory_for_topological_model_categories}
Let $\C$ be a topological model category. Let $G$ be a topological group, i.e., a group object in the category of weak Hausdorff $k$-spaces $\U$. We identify $G$ with the topological category with one object $*$ and morphism space $G$. 

\begin{defn}
The category of $G$-objects in $\C$ is the category $\C^
G$ of $\U$-functors from $G$ to $\C$. For a closed subgroup $H$ of $G$, the \emph{$H$-fixed point functor $(-)^H$} is defined as the composition $\C^G\to\C^H\stackrel{\lim}{\rightarrow} \C$ of the restriction with the (conical) limit functor.
\end{defn} 

\begin{rem}
Conical limits are defined in \cite[(3.57)]{kelly} for $\mathcal V$-functors where $\mathcal V$ is any cartesian closed category. For us, it suffices to know that the functor $\lim\colon \C^H\to\C$ takes a $\U$-functor $X$ to the fixed points of the underlying functor $X$ with respect to the underlying discrete group $H$.
\end{rem}

The restriction to closed subgroups is for point-set topological convenience. For instance, the set of orbits $G/H$ with the quotient topology will already be weak Hausdorff. The definition of the $H$-fixed point functor makes sense for any subgroup $H$ of $G$, but $(-)^H$ is naturally isomorphic to $(-)^{\bar{H}}$, where $\bar{H}$ denotes the closure of the set $H$ in $G$. This follows from the special case \cite[\S2.1]{fausk} that $X^H=X^{\bar{H}}$ for $G$-spaces $X$ as the induced map
\[\Map_\C(A,Y^H)\to\Map_\C(A,Y)^H\]
is an isomorphism for any subgroup $H$ and for any objects $A$ of $\C$ and $Y$ of $\C^G$.

\begin{ex}
Recall that the category of weak Hausdorff $k$-spaces $\U$ is a topological model category with tensor the cartesian product and cotensor $\Uu(-,-)$. The category of $G$-objects in $\U$ coincides with the category of $G$-spaces, i.e., of spaces with a continuous $G$-action. Moreover, for any closed subgroup $H$ of $G$, the $H$-fixed point functor takes a $G$-space $X$ to its ordinary fixed point space $X^H=\{x\in X|hx=x\text{ for all } h\in H\}$.
\end{ex} 

We construct explicit left adjoints to the $H$-fixed point functors $\C^G\to \C$ in order to transfer the model category structure from $\C$ to the following model category structure on the category of $G$-objects in $\C$.

\begin{defn}
Let $\F$ be a collection of closed subgroups of $G$. The category $\C^G$ is said to admit the \emph{$\F$-model structure} if it is a model category with weak equivalences and fibrations the maps that are taken to weak equivalences and fibrations in $\C$ by each $H$-fixed point functor with $H\in \F$.
\end{defn} 

As in the discrete case, the following lemma is standard.
\begin{lem} 
\label{topfixedpointrepresentable}
For a closed subgroup $H$ of $G$, consider the homogeneous space $G/H$. Evaluating a $G$-map $G/H\to X$ in the coset $H$ yields an isomorphism $\UGu(G/H,X)\cong X^H$, natural in the $G$-space $X$.
\end{lem}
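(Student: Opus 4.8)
The plan is to mimic the proof of the analogous statement in the discrete setting (Lemma~\ref{fixedpointrepresentable}), taking care that all the maps involved are continuous rather than merely set-theoretic. Recall that $\U^G$ is the category of $\U$-functors $G\to\U$, i.e.\ of $G$-spaces, and that for a $G$-space $X$ the fixed point space $X^H=\{x\in X\mid hx=x\text{ for all }h\in H\}$ carries the ($k$-ified) subspace topology. First I would define, for each $x\in X^H$, the $G$-map $\hat{x}\colon G/H\to X$, $gH\mapsto gx$; this is well-defined because $x$ is $H$-fixed, it is $G$-equivariant by construction, and it is continuous because $G\to X$, $g\mapsto gx$ is continuous (it is the composite of the action $G\times X\to X$ with $g\mapsto (g,x)$) and factors through the quotient map $G\to G/H$, which is a quotient map also in $\U$. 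Thus evaluation at the coset $H$ gives a well-defined \emph{inverse}, and on underlying sets the two maps $\Map_{\U^G}(G/H,X)\to X^H$, $f\mapsto f(H)$, and $X^H\to\Map_{\U^G}(G/H,X)$, $x\mapsto\hat{x}$, are mutually inverse bijections by exactly the argument of the discrete case.

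It remains to promote this bijection to a homeomorphism and to check naturality. For naturality in $X$: given a $G$-map $\varphi\colon X\to Y$, the square relating $f\mapsto f(H)$ for $X$ and for $Y$ commutes because $(\varphi\circ f)(H)=\varphi(f(H))$, and $\varphi$ restricts to $X^H\to Y^H$; this is immediate. For continuity of the bijection in both directions, I would use that $\Map_{\U^G}(G/H,X)$ is by definition a subspace of $\prod_{g H\in G/H}\Uu(*,X)\cong\prod_{G/H}X$ in $\U$, so that the evaluation $f\mapsto f(H)$ is the restriction of a projection and hence continuous; its image lands in $X^H$, which carries the subspace topology, so $f\mapsto f(H)$ is continuous as a map to $X^H$. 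Conversely, $x\mapsto\hat{x}$ is continuous because, composing with the subspace inclusion into $\prod_{G/H}X$, each component $x\mapsto g x$ is continuous by continuity of the action, and a map into a product (and into a $k$-subspace) is continuous iff its components are. Hence the bijection and its inverse are both continuous, i.e.\ a homeomorphism in $\U$.

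The main point to be careful about — the step I expect to require the most attention — is the interaction of the various topologies with the $k$-space and weak Hausdorff conditions: that $G/H$ with the quotient topology is again a weak Hausdorff $k$-space (which is why one restricts to \emph{closed} subgroups $H$, as noted just before the lemma), that the quotient map $G\to G/H$ remains an appropriate quotient in $\U$ so that maps out of $G/H$ correspond to $H$-invariant maps out of $G$, and that the $k$-ified subspace topologies on $X^H\subset X$ and on $\Map_{\U^G}(G/H,X)\subset\prod_{G/H}X$ are the ones making the above continuity arguments go through. Once these point-set matters are in place, the proof is a routine transcription of Lemma~\ref{fixedpointrepresentable} with ``continuous'' inserted throughout, and since the paper has already flagged that the restriction to closed subgroups is precisely for this kind of point-set convenience, it is reasonable to treat these verifications as standard and only sketch them.
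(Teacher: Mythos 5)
The paper itself gives no proof of this lemma (it is flagged as standard), so there is no argument of the author's to compare against step by step; judged on its own terms, your outline has the right shape but contains one genuine gap. You misidentify the topology on $\UGu(G/H,X)$: since the indexing category $G$ has a single object $*$, the paper's definition of the enriched hom in a diagram category makes $\UGu(G/H,X)$ a subspace of the single factor $\Uu(G/H,X)$, i.e.\ of the mapping space with the ($k$-ified) compact-open topology --- not a subspace of the product $\prod_{G/H}X$. Consequently your continuity argument for $x\mapsto\hat{x}$, which only checks the components $x\mapsto gx$, establishes continuity into the topology of pointwise convergence; that topology is in general strictly coarser than the compact-open topology, so the argument as written does not show that $x\mapsto\hat{x}$ is continuous into $\UGu(G/H,X)$. (The evaluation direction survives, but for a different reason than ``restriction of a projection'': evaluation $\Uu(G/H,X)\times G/H\to X$ is continuous because $\U$ is cartesian closed.)

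The repair is short and uses exactly the structure you have available: $X^H\to\Uu(G/H,X)$, $x\mapsto\hat{x}$, is continuous if and only if its adjoint $X^H\times G/H\to X$, $(x,gH)\mapsto gx$, is continuous. The latter is induced from the continuous map $X^H\times G\to X$, $(x,g)\mapsto gx$, by passing through $X^H\times G\to X^H\times G/H$, which is again a quotient map in $\U$ because crossing with a fixed object preserves quotient maps in a cartesian closed category of spaces. Everything else in your sketch --- well-definedness and equivariance of $\hat{x}$, the set-level bijection, naturality in $X$, and the point-set caveats about $H$ being closed so that $G/H$ and $X^H$ remain in $\U$ --- is correct and is indeed the standard route.
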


For any object $A$ of $\C$ and homogeneous space $G/H$ considered as a $G$-space, denote the composition
\[G\stackrel{G/H}{\longrightarrow}\U\stackrel{-\otimes A}{\longrightarrow} \C
\]
by $G/H\otimes A$.
\begin{lem}
The functor $G/H\otimes - \colon \C \to \C^G$ is left adjoint to the $H$-fixed point functor.
\end{lem}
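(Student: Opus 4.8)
The plan is to transcribe the proof of the discrete analogue given in \S\ref{sec:discrete}, replacing $\Set$-enrichment by $\U$-enrichment throughout. For a $G$-object $X$ in $\C$, let $\Cu(A,X)$ denote the $G$-space whose underlying space is $\Cu(A,X(*))$ and on which $g\in G$ acts by $X(g)_*$; this is a $G$-space because $\Cu(A,-)\colon\C\to\U$ is a $\U$-functor, and it is the topological counterpart of the $G$-set $\C(A,X)$ appearing in the discrete proof. The adjunction isomorphism will be exhibited as the composite
\begin{align*}
\C^G(G/H\otimes A,X) &\cong \UGu(G/H,\Cu(A,X)) \\
&\cong \Cu(A,X)^H \\
&\cong \Cu(A,X^H),
\end{align*}
natural in $A$ and $X$; passing to underlying sets then yields the bijection $\C^G(G/H\otimes A,X)\cong\C(A,X^H)$. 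Nothing is lost in passing to underlying sets, and conversely, since any adjunction of $\U$-functors is automatically a $\U$-adjunction, as recalled in \S\ref{sec:topological}, it would equally suffice to check the bijection on hom-sets.

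The first isomorphism is induced by the natural isomorphism $\Cu(S\otimes A,B)\cong\Uu(S,\Cu(A,B))$ expressing that $\C$ is tensored over $\U$, with $S$ the $G$-space $G/H$: by definition $G/H\otimes A$ is the composite of $G/H\colon G\to\U$ with $-\otimes A\colon\U\to\C$, so a morphism $G/H\otimes A\to X$ in $\C^G$ is a map $(G/H)\otimes A\to X(*)$ in $\C$ compatible with the $G$-structure, and under the tensor adjunction this corresponds precisely to a map of $G$-spaces $G/H\to\Cu(A,X)$. The second isomorphism is Lemma~\ref{topfixedpointrepresentable}, given by evaluation at the coset $H$. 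The third holds because $\Cu(A,-)$, being right adjoint to $-\otimes A$, preserves all (conical) limits, in particular the limit over the underlying discrete group $H$ defining $(-)^H$; hence $\Cu(A,X)^H\cong\Cu(A,X^H)$. Composing the three isomorphisms and taking underlying sets proves the lemma.

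The one step requiring genuine care is the compatibility claim inside the first isomorphism: that the $G$-structure constraint on a map $(G/H)\otimes A\to X(*)$ translates, under the tensor adjunction, into equivariance for the action $g\mapsto X(g)_*$ on $\Cu(A,X(*))$ and the translation action on $G/H$. I would verify this by a diagram chase, using naturality of $\Cu(S\otimes A,B)\cong\Uu(S,\Cu(A,B))$ in the space variable $S$ applied to the morphisms that $G/H$ and $X$ carry as $\U$-functors on $G$. This is routine, and it is the only place where the special features of $\U$-enrichment are actually invoked; everything else is formal.
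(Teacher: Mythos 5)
Your proof is correct and follows essentially the same route as the paper's: the adjunction isomorphism is exhibited as the composite of the tensor adjunction $\C^G(G/H\otimes A,X)\cong \U^G(G/H,\Cu(A,X))$, the representability of the fixed point functor from Lemma~\ref{topfixedpointrepresentable}, and the fact that $\Cu(A,-)$ preserves limits. The only cosmetic difference is that the paper keeps track of underlying sets via $\U(*,-)$ at each stage rather than passing to underlying sets at the end, and it leaves the equivariance check in the first isomorphism implicit where you spell it out.
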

\begin{proof}
The adjunction isomorphism is the composite
\begin{equation*}
\begin{split}
\C^G(G/H\otimes A,X)& \cong \U^G(G/H,\Cu(A,X)) \\
& \cong \U(*,\Cu(A,X)^H) \\
& \cong \U(*,\Cu(A,X^H))\cong \C(A,X^H),
\end{split}
\end{equation*}
where the first isomorphism is induced by the isomorphism expressing that $\C$ is tensored, the second isomorphism is given by Lemma~\ref{topfixedpointrepresentable} and the third one comes from the fact that $\Cu(A,-)$ preserves limits.
\end{proof}

In the discrete case, the tensor was the coproduct and thus $(G/H)^K\otimes -$ preserved cofibrations. In the topological setting, this will be true if $(G/H)^K$ is cofibrant in $\U$, by the pushout-product axiom.

The fact \cite{willson} that $G/H$ and $(G/H)^K$ are smooth manifolds for any compact Lie group $G$ and closed subgroups $H,K$ implies the following lemma.
\begin{lem}
\label{compactlie}
Suppose that $G$ is a compact Lie group. For any closed subgroups $H,K$ of $G$, the spaces $G/H$ and $(G/H)^K$ are cofibrant in $\U$.
\end{lem}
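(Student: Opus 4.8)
The plan is to reduce the statement to the classical fact that a compact Lie group, and more generally a smooth manifold, is cofibrant in $\U$ because it carries the structure of a CW complex. For the first assertion, I would recall that if $G$ is a compact Lie group and $H$ a closed subgroup, then $H$ is itself a compact Lie group and $G/H$ is a smooth closed manifold (this is the standard homogeneous space theorem: the quotient of a Lie group by a closed subgroup is a smooth manifold, and compactness of $G$ passes to $G/H$). Every smooth manifold admits a finite or countable CW structure, hence is cofibrant in $\U$ with the Quillen model structure. So the cofibrancy of $G/H$ is immediate from classical differential topology.

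For the second assertion, the key input is the theorem of Illman \cite{illman} that for a compact Lie group $G$, every smooth $G$-manifold admits a $G$-CW structure; applying this to $G/H$ (which is a smooth $G$-manifold under left translation) gives $G/H$ the structure of a $G$-CW complex. I would then invoke the elementary observation — the same one motivating the cellularity conditions in Proposition \ref{propFmodelstructure}, recorded in the discussion preceding it — that the $K$-fixed point functor on $G$-spaces takes a $G$-CW complex to a CW complex: it preserves the pushouts and transfinite compositions in the cell-attachment process and sends an equivariant cell $G/L\times D^n$ to $(G/L)^K\times D^n$, where $(G/L)^K$ is a discrete set (again because $G$ is compact Lie, so $(G/L)^K$ is a closed submanifold of the compact manifold $G/L$ with trivial $K$-action, hence $0$-dimensional and compact, thus finite). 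Therefore $(G/H)^K$ is a CW complex and in particular cofibrant in $\U$.

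Alternatively, and perhaps more self-containedly, one can prove both statements at once: by Illman's theorem $G/H$ is a $G$-CW complex, and one checks directly that for any $G$-CW complex $X$ and any closed subgroup $K$, the fixed point space $X^K$ inherits a CW structure by the fixed-point argument above, whence $X^K$ is cofibrant; taking $K=\{e\}$ recovers the first claim and general $K$ gives the second.

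\textbf{Main obstacle.} The only non-formal ingredient is the existence of a $G$-CW structure on the smooth $G$-manifold $G/H$, i.e.\ Illman's smooth triangulation theorem for compact Lie group actions; everything else is bookkeeping with the cell-attachment colimits and the fact that fixed points of such colimits are again such colimits. A secondary point requiring (easy) care is verifying that $(G/L)^K$ is genuinely discrete — this uses that a fixed submanifold of a compact manifold under a compact group action is a compact $0$-manifold, hence finite — so that the fixed cells $(G/L)^K\times D^n$ are honest disjoint unions of disks and the resulting $X^K$ is an ordinary (even finite-dimensional) CW complex.
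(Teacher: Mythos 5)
Your treatment of the first assertion is fine and agrees with the paper: $G/H$ is a compact smooth manifold, hence triangulable and in particular a CW complex, hence cofibrant. The second assertion is where your argument breaks down. The claim that $(G/L)^K$ is discrete --- ``a closed submanifold \dots\ with trivial $K$-action, hence $0$-dimensional'' --- is false for compact Lie groups: a submanifold on which $K$ acts trivially can have any dimension. For instance, take $G=SU(2)$, $L$ a maximal torus and $K=\{\pm I\}$ the center; then $K$ acts trivially on $G/L\cong S^2$, so $(G/L)^K=S^2$ is $2$-dimensional. (Discreteness of $(G/L)^K$ is exactly what one has for \emph{discrete} $G$, which is why the paper's motivating discussion of the cellularity conditions before Proposition \ref{propFmodelstructure} carries the hypothesis that $G$ is discrete.) Consequently the $K$-fixed points of a $G$-CW complex do not inherit an ordinary CW structure from the equivariant one: the fixed cells are $(G/L)^K\times D^n$ with $(G/L)^K$ a possibly positive-dimensional manifold. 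You could still conclude cofibrancy of the resulting colimit via the pushout-product axiom \emph{provided} each $(G/L)^K$ is already known to be cofibrant --- but that is essentially the statement you are trying to prove, so the route through Illman's triangulation theorem does not close.

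The paper's proof is shorter and avoids equivariant cell structures altogether: since $K$ is a compact Lie group acting smoothly on the smooth manifold $G/H$, the slice theorem (the reference given is \cite[5.6]{tomdiecktransf}) shows that the fixed point set $(G/H)^K$ is itself a smooth manifold; smooth manifolds are triangulable by Whitney, hence CW complexes, hence cofibrant in $\U$. You should replace your fixed-point-of-$G$-CW-complex argument for the second assertion by this direct appeal to the slice theorem.
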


Working with a compact Lie group, we provide two positive answers regarding the existence of the $\F$-model structure.
\begin{prop}
\label{Fmodeltop}
Suppose that $G$ is a compact Lie group and that the topological model category $\C$ is cofibrantly generated. Let $\mathcal F$ be a collection of closed subgroups of $G$. The category of $G$-objects in $\C$ admits the $\mathcal{F}$-model structure and is a topological model category in each of the following two situations.
\begin{enumerate}[a)]
\item
\label{Fmodel_assumption1}
The fixed point functors $(-)^H$ for $H\in\mathcal{F}$ satisfy the cellularity conditions \ref{cellular}.
\item
\label{Fmodel_assumption2}
The cofibrations of $\C$ are monomorphisms, and every object of $\C$ is fibrant.
\end{enumerate}
\end{prop}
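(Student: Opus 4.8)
The plan is to obtain the $\mathcal F$-model structure by transferring the cofibrantly generated model structure of $\C$ along the set of left adjoints $\{G/H\otimes -\}_{H\in\mathcal F}$ to the fixed point functors, via the Transfer Theorem \ref{transport}, exactly as in the discrete Proposition \ref{propFmodelstructure}, and then to verify the pushout-product axiom so that $\C^G$ becomes a topological model category. Since $\C$ is complete and cocomplete and (co)limits of $\U$-functors $G\to\C$ are computed in $\C$, as recalled for $\CD$ before Proposition \ref{projectivemodelstructure}, so is $\C^G$. Write $I$, $J$ for the generating cofibrations and the generating acyclic cofibrations of $\C$, and set $I_{\mathcal F}=\{G/H\otimes f\mid f\in I,\ H\in\mathcal F\}$ and $J_{\mathcal F}=\{G/H\otimes f\mid f\in J,\ H\in\mathcal F\}$. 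The one genuinely new ingredient, compared with the discrete case, is the observation that the underlying map in $\C$ of $G/K\otimes f$ is $(G/K)\otimes f$, the pushout-product of $\emptyset\to G/K$ with $f$; since $G$ is a compact Lie group, $G/K$ and $(G/K)^H$ are cofibrant in $\U$ by Lemma \ref{compactlie}, so the pushout-product axiom in $\C$ makes $(G/K)\otimes f$ and $(G/K)^H\otimes f$ cofibrations in $\C$, acyclic whenever $f$ is.

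In situation \ref{Fmodel_assumption1} I would run the argument of Proposition \ref{propFmodelstructure} with the coproducts $\coprod_{G/K}f$ there replaced by the tensors $(G/K)\otimes f$. The underlying map in $\C$ of a relative $I_{\mathcal F}$-cell complex is a transfinite composition of pushouts of the cofibrations $(G/K)\otimes f$, hence a cofibration; so by the cellularity conditions \ref{cellular} the functor $(-)^H$ carries such a cell complex to a transfinite composition of pushouts of the cofibrations $(G/K)^H\otimes f$, hence to a cofibration, and it carries a relative $J_{\mathcal F}$-cell complex to a transfinite composition of pushouts of the acyclic cofibrations $(G/K)^H\otimes f$, hence to an acyclic cofibration, in particular to a weak equivalence. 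This verifies the hypotheses of Lemma \ref{lemtransport1}, hence of Theorem \ref{transport}, so $\C^G$ admits the $\mathcal F$-model structure.

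In situation \ref{Fmodel_assumption2} I would invoke Lemma \ref{lemtransport2} instead. Every $G$-object $X$ is fibrant, because $X^H$ is an object of $\C$ and hence fibrant, so $X^H\to(*)^H=*$ is a fibration for every $H\in\mathcal F$; hence the identity functor is a fibrant replacement on $\C^G$. For a functorial path object I would take the cotensor $[I,X]$ of the unit interval $I\in\U$ with $X$: since each cotensor functor $[Y,-]\colon\C\to\C$ preserves limits, applying $(-)^H$ to the factorization $X\cong[*,X]\to[I,X]\to[*\sqcup *,X]\cong X\times X$ of the diagonal produces in $\C$ the factorization $X^H\to[I,X^H]\to X^H\times X^H$, which is a path object of $X^H$ by the cotensor form \cite[4.2.2]{hovey} of the pushout-product axiom in $\C$ — using that $*\to I$ is an acyclic cofibration and $*\sqcup *\to I$ a cofibration in $\U$ and that $X^H\to *$ is a fibration. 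Hence $[I,X]$ is a functorial path object in $\C^G$, so Lemma \ref{lemtransport2} applies; the smallness hypotheses of Theorem \ref{transport} are handled as in the proof of Proposition \ref{projectivemodelstructure}, again using Lemma \ref{compactlie} to see that each $(G/K)\otimes f$ is a cofibration in $\C$. So $\C^G$ admits the $\mathcal F$-model structure.

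It remains to check that $\C^G$ is a topological model category in both cases. As recalled for $\CD$, the category $\C^G$ is enriched, tensored and cotensored over $\U$, so only the pushout-product axiom needs proof, and for this I would use its cotensor form \cite[4.2.2]{hovey}: for a cofibration $i\colon A\to B$ in $\U$ and a fibration $p\colon E\to D$ in $\C^G$, the map $[B,E]\to[B,D]\times_{[A,D]}[A,E]$ is a fibration in $\C^G$, acyclic if $i$ or $p$ is. Since fibrations and acyclic fibrations of $\C^G$ are detected by the functors $(-)^H$ with $H\in\mathcal F$, and these commute with cotensors and with pullbacks, applying $(-)^H$ turns this map into $[B,E^H]\to[B,D^H]\times_{[A,D^H]}[A,E^H]$, which is a fibration in $\C$, acyclic in the relevant cases, by the pushout-product axiom in $\C$. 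The main obstacle, relative to the discrete case where $G/K\otimes -$ is merely a coproduct and so trivially preserves cofibrations, is precisely the cofibrancy of $G/K$ and $(G/K)^H$ in $\U$ — which is where the hypothesis that $G$ be a compact Lie group and Lemma \ref{compactlie} come in; the remainder is bookkeeping with the cellularity conditions in case \ref{Fmodel_assumption1} and with the cotensor $[I,-]$ in case \ref{Fmodel_assumption2}.
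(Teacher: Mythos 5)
Your treatment of situation \ref{Fmodel_assumption1} and of the pushout-product axiom matches the paper's proof: the same transfer along $\{G/H\otimes -,(-)^H\}_{H\in\mathcal F}$, the same use of Lemma \ref{compactlie} plus the pushout-product axiom to make $(G/K)\otimes f$ and $(G/K)^H\otimes f$ cofibrations, the same appeal to Lemma \ref{lemtransport1} via the cellularity conditions, and the same cotensor form of the pushout-product axiom at the end.

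In situation \ref{Fmodel_assumption2}, however, there is a genuine gap in your smallness verification. You dispose of it by saying it is ``handled as in the proof of Proposition \ref{projectivemodelstructure}'', but neither argument given there applies: in case \ref{projective_assumption1} of that proposition the smallness comes from Lemma \ref{lemtransport1}, which requires the right adjoints to preserve colimits of $\lambda$-sequences of relative cell complexes --- exactly what the fixed point functors are \emph{not} assumed to do in situation \ref{Fmodel_assumption2} (that preservation is the cellularity condition \ref{cellular1}, i.e.\ hypothesis \ref{Fmodel_assumption1}); and case \ref{projective_assumption2} there is special to $\C=\U$, where every object is small relative to inclusions. Symptomatically, your argument never uses the hypothesis that the cofibrations of $\C$ are monomorphisms, which is present precisely for this step. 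The paper's proof shows directly that $G/H\otimes A$ is small relative to $I_{\mathcal F}$-cell: by adjunction one must show that
\[
\colim_{\beta<\lambda}\C(A,X_\beta)^H\to \C(A,\colim X)^H
\]
is a bijection for a $\lambda$-sequence $X$ of relative $I_{\mathcal F}$-cell complexes; since the underlying maps $X_\beta\to X_{\beta+1}$ are cofibrations in $\C$ (here Lemma \ref{compactlie} enters) and cofibrations are monomorphisms, $\{\C(A,X_\beta)\}_\beta$ is a directed system of injections of $G$-sets, so $(-)^H$ commutes with its colimit, and one concludes from the $\kappa$-smallness of $A$ relative to the cofibrations of $\C$. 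You need to supply this (or an equivalent) argument; the rest of your proposal is sound.
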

\begin{proof}
We apply the Transfer Theorem \ref{transport} to the adjunctions $\{G/H\otimes -, (-)^H\}_{H\in\mathcal F}$. Since $\C$ is cotensored and cocomplete, so is the category of $\U$-functors $\C^G$. Similarly, since $\C$ is tensored and complete, so is $\C^G$.

Note that $(G/H)\otimes -$ as a functor to $\C$ and $(G/H)^K\otimes -$ preserve cofibrations by Lemma~\ref{compactlie} and the pushout-product axiom. Thus, in situation \ref{Fmodel_assumption1}, the conditions of the Transfer Theorem can be checked as in the discrete case \ref{propFmodelstructure}.

In situation \ref{Fmodel_assumption2}, note that every object $X$ of $\C^G$ is fibrant and that the cotensor $I\pitchfork X$ of the unit interval $I$ and the $G$-object $X$ is a path object for $X$. Thus, the second condition of the Transfer Theorem holds by Lemma~\ref{lemtransport2}. 

By abuse of notation, denote the set of generating cofibrations of $\C$ by $I$ and the set of generating acyclic cofibrations by $J$. We show that 
\[I_{\mathcal F}=\{G/H\otimes f|H\in \mathcal{F}, f\in I\}\]
permits the small object argument. Let $A$ be the domain of a generating cofibration $f$ of $\C$. Let $\kappa$ be a cardinal such that $A$ is $\kappa$-small relative to the cofibrations. Let $\lambda\geq \kappa$ be a regular cardinal and $X$ a $\lambda$-sequence of relative $I_{\mathcal{F}}$-cell complexes. Note that the underlying map of a relative $I_{\mathcal{F}}$-cell complex is a cofibration in $\C$, since $G/H$ is cofibrant in $\U$. We show that the induced map
\[\colim_{\beta<\lambda} \C^G(G/H\otimes A, X_\beta)\to \C^G(G/H\otimes A, \colim X)
\]
is a bijection. This is equivalent to showing that the induced map
\[
\colim_{\beta<\lambda} \C(A,X_\beta)^H\to \C(A,\colim X)^H
\]
is a bijection. Since the cofibrations of $\C$ are monomorphisms, the functor $\C(A,X)$ is a directed diagram of monomorphisms of $G$-sets and thus $\colim_{\beta<\lambda} \C(A,X_\beta)^H\cong (\colim_{\beta<\lambda}\C(A,X_\beta))^H$. We conclude by noting that \[\colim_{\beta < \lambda}\C(A,X_\beta)\cong \C(A,\colim X)\]
as $G$-sets by the choice of $\kappa$. Similarly, the set $J_{\mathcal{F}}=\{G/H\otimes f|H\in \mathcal{F}, f\in J\}$ permits the small object argument.

It is left to show that $\C^G$ is a topological model category, i.e., that the pushout-product axiom holds. As in the proof of Proposition \ref{projectivemodelstructure}, one checks the equivalent condition involving the cotensor.   
\end{proof}

The second approach to equivariant homotopy theory for $\C$ is via orbit diagrams.
\begin{defn}
Let $\mathcal F$ be a collection of closed subgroups of $G$. The \emph{orbit category $\OF$ with respect to $\mathcal F$} is the topological category given by the full subcategory of $\U^G$ with objects the homogeneous spaces $G/H$ for $H\in \mathcal{F}$.
\end{defn}

\begin{prop}
\label{projectiveorbit}
Suppose that $G$ is a compact Lie group and that the topological model category $\C$ is cofibrantly generated. Then for any collection of closed subgroups $\mathcal F$ of $G$, the category of $\U$-functors $\COFop$ admits the projective model structure.
\end{prop}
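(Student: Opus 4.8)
The plan is to apply Proposition~\ref{projectivemodelstructure} to the category of $\U$-functors $\COFop=\C^{\OFop}$. Since $\mathcal F$ is a set, the orbit category $\OF$, and hence its opposite $\OFop$, is a small topological category, so $\C$ being cofibrantly generated, the only thing that has to be checked is hypothesis~\ref{projective_assumption1} of Proposition~\ref{projectivemodelstructure}: for any two objects of $\OFop$, tensoring with their mapping space is a functor $\C\to\C$ preserving cofibrations and acyclic cofibrations.

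First I would identify the mapping spaces of $\OFop$. Its objects are the homogeneous spaces $G/H$ with $H\in\mathcal F$, and for $H,K\in\mathcal F$ the space $\OFopu(G/H,G/K)$ is by definition the morphism space $\OG(G/K,G/H)=\UGu(G/K,G/H)$, which Lemma~\ref{topfixedpointrepresentable} identifies with the fixed point space $(G/H)^K$. Thus every mapping space of $\OFop$ is of the form $(G/L)^M$ for some $L,M\in\mathcal F$.

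Next, since $G$ is a compact Lie group, Lemma~\ref{compactlie} gives that $(G/L)^M$ is cofibrant in $\U$. Hence $\emptyset\to(G/L)^M$ is a cofibration in $\U$, and applying the pushout-product axiom of the topological model category $\C$ to this cofibration together with an arbitrary cofibration (resp.\ acyclic cofibration) $i\colon A\to B$ in $\C$ — and using that tensoring with the initial object $\emptyset$ of $\U$ yields the initial object of $\C$, so that the relevant pushout-product reduces to $(G/L)^M\otimes i$ — shows that $(G/L)^M\otimes i$ is a cofibration (resp.\ acyclic cofibration) in $\C$. This is exactly hypothesis~\ref{projective_assumption1}, so Proposition~\ref{projectivemodelstructure}\ref{projective_assumption1} applies and $\COFop$ admits the projective model structure (and is in fact a topological model category).

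The argument is thus essentially a bookkeeping reduction to Proposition~\ref{projectivemodelstructure}; the one substantive input is the cofibrancy of the fixed point spaces $(G/H)^K$ in $\U$, which is precisely where compactness of the Lie group $G$ is used, via Lemma~\ref{compactlie} (triangulability of smooth manifolds together with the slice theorem). Once that identification of mapping spaces is in place, there is no further obstacle.
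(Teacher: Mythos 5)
Your proof is correct and follows exactly the paper's route: identify the mapping spaces of $\OFop$ as fixed point spaces $(G/H)^K$, invoke Lemma~\ref{compactlie} for their cofibrancy, and conclude via Proposition~\ref{projectivemodelstructure}\ref{projective_assumption1}. The only difference is that you spell out the pushout-product argument showing that tensoring with a cofibrant space preserves (acyclic) cofibrations, which the paper records in the discussion preceding Lemma~\ref{compactlie} rather than in the proof itself.
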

\begin{proof}
The space $\Map_{\OFop}(G/H,G/K)$ is isomorphic to $(G/H)^K$ and thus cofibrant in $\U$ by Lemma~\ref{compactlie}. We conclude by Proposition \ref{projectivemodelstructure}.
\end{proof}

If $\mathcal F$ contains the trivial subgroup, then we can, as in the discrete setting, compare the two approaches using the inclusion functor $i\colon G\to \OFop$.
\begin{lem}
\label{quillenpairtop}
Let $\F$ be a collection of closed subgroups of $G$ containing the trivial subgroup $\{e\}$. The precomposition functor $i^*\colon \COFop\to \C^G$ has a fully faithful right adjoint $i_*$. If $\COFop$ admits the projective model structure and $\C^G$ admits the $\F$-model structure, then $i^*\colon \COFop \to \C^G$ is a left Quillen functor.
\end{lem}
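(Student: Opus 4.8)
The plan is to transcribe the proof of the discrete analogue, Lemma~\ref{quillenpair}, the single new point being that the right adjoint $i_*$, which in the discrete case is obviously a functor, must here be checked to be a $\U$-functor. First I would define $i_*\colon \C^G\to \COFop$ explicitly. On objects, set $i_*X(G/H)=X^H$; on a morphism $R_b\colon G/H\to G/K$ of $\OFop$, i.e.\ a morphism $G/K\to G/H$ of $\OF$ with $b^{-1}Kb\subset H$, let $i_*X(R_b)\colon X^H\to X^K$ be the restriction of $X(b)\colon X(*)\to X(*)$, which lands in $X^K$ since $X(k)X(b)(x)=X(b)X(b^{-1}kb)(x)=X(b)(x)$ for $x\in X^H$ and $k\in K$. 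Compatibility of $i_*X$ with composition and identities in $\OFop$ is an algebraic check identical to the discrete one. On a morphism $f\colon X\to Y$ of $\C^G$, let $i_*f$ be the natural transformation with component $f^H\colon X^H\to Y^H$ at $G/H$.

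The main obstacle, and the only step that genuinely differs from the discrete setting, is to verify that $i_*$ is a $\U$-functor. For fixed $X$ this amounts to showing that $R_b\mapsto i_*X(R_b)$ is a continuous map $\OFopu(G/H,G/K)\to \Cu(X^H,X^K)$. By Lemma~\ref{topfixedpointrepresentable} the source is $(G/H)^K$, and the quotient map $G\to G/H$ restricts to a quotient map $N\to (G/H)^K$, where $N=\{b\in G\mid b^{-1}Kb\subset H\}$ is a closed subspace of $G$ stable under right multiplication by $H$. The composite of $N\hookrightarrow G\to \Cu(X(*),X(*))$, whose second map underlies the $\U$-functor $X$, with the precomposition map $\Cu(X(*),X(*))\to \Cu(X^H,X(*))$ along $X^H\hookrightarrow X(*)$ is continuous, takes values in the subspace $\Cu(X^H,X^K)\subset \Cu(X^H,X(*))$ (a subspace because $\Cu(X^H,-)$ preserves the limit defining $X^K$), and is constant on right $H$-cosets because $X(h)$ restricts to the identity on $X^H$; hence it descends to the desired continuous map on $(G/H)^K$. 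That $i_*$ itself is a $\U$-functor $\C^G\to \COFop$ then follows, since each $(-)^H\colon \C^G\to \C$ is a $\U$-functor---being the composite of a restriction functor with a conical limit functor---and the space of natural transformations $i_*X\to i_*Y$ is a subspace of $\prod_{H\in\F}\Cu(X^H,Y^H)$.

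With $i_*$ available, I would exhibit the adjunction exactly as in the proof of Lemma~\ref{quillenpair}: the counit $\varepsilon\colon i^*i_*\to\id_{\C^G}$ has component at $X$ the isomorphism with underlying map $X^{\{e\}}\to X(*)$, and the unit $\eta\colon\id_{\COFop}\to i_*i^*$ has component at $T$ the map induced at $G/H$ by $T(R_e)\colon T(G/H)\to T(G/\{e\})=i^*(T)(*)$, and the triangle identities are checked verbatim. Since $i^*$ is a $\U$-functor (precomposition with $i\colon G\to\OFop$), $i_*$ is a $\U$-functor, and $\varepsilon,\eta$ are ordinary natural transformations, the pair $(i^*,i_*)$ is automatically a $\U$-adjunction; moreover $i_*$ is fully faithful because $\varepsilon$ is an isomorphism.

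Finally, assume $\COFop$ carries the projective and $\C^G$ the $\F$-model structure. A morphism $p$ of $\C^G$ is an (acyclic) fibration precisely when $p^H$ is an (acyclic) fibration of $\C$ for every $H\in\F$; since the objects of $\OFop$ are exactly the $G/H$ with $H\in\F$ and $(i_*p)(G/H)=p^H$, this is exactly the condition that $i_*p$ be a projective (acyclic) fibration. Hence $i_*$ preserves fibrations and acyclic fibrations, so $i_*$ is a right Quillen functor and $i^*$ is a left Quillen functor. Apart from the enriched bookkeeping of the second paragraph, everything here transcribes the discrete argument.
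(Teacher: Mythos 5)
Your proof is correct and follows essentially the same route as the paper, which simply declares the proof to be the same as that of the discrete analogue, Lemma~\ref{quillenpair}. The extra work you do in verifying that $i_*$ is a $\U$-functor (via the quotient map $N\to (G/H)^K$) is sound and in fact supplies the content of the paper's subsequent Remark, which asserts the $\U$-enrichment of $(i^*,i_*)$ without proof; it is not strictly needed for the lemma as stated, since an ordinary adjunction suffices for the Quillen-pair conclusion.
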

\begin{proof}
The proof is the same as that of its discrete analogue, Lemma~\ref{quillenpair}.
\end{proof}

\begin{rem}
The functors $i^*$ and $i_*$ are $\U$-functors and $(i^*,i_*)$ forms a $\U$-adjunction.
\end{rem}

The adjunction $(i^*,i_*)$ is a Quillen equivalence under a requirement on the fixed point functors.
\begin{prop}
\label{quillenequivalencetop}
Suppose that in the situation of Lemma~\ref{quillenpairtop} the model structures exist and that there exists a set $I$ of cofibrations of $\C$ such that $\COFop$ is cofibrantly generated with generating cofibrations 
\[
\{\OFopu(G/H,-)\otimes f| H\in \mathcal{F}, f\in I\}.
\]
If the fixed point functors satisfy the cellularity conditions \ref{cellular}, then there is a Quillen equivalence $i^*\colon\COFop \leftrightarrows \C^G\colon i_*$. 
\end{prop}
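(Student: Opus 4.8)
The plan is to carry over the proof of Theorem~\ref{mainthm1} to the topological setting almost verbatim, the tensor over $\Set$ being replaced by the tensor over $\U$; no enrichment bookkeeping is needed, since an adjunction between $\U$-functors is automatically a $\U$-adjunction. First I would record that $i^*\colon\COFop\leftrightarrows\C^G\colon i_*$ is a Quillen pair by Lemma~\ref{quillenpairtop} and the assumed existence of the two model structures. Since, by the definition of the $\F$-model structure, a map $f\colon i^*T\to X$ is a weak equivalence precisely when $i_*(f)$ is one, the pair $(i^*,i_*)$ is a Quillen equivalence if and only if the unit $\eta_T\colon T\to i_*i^*T$ is a weak equivalence for every cofibrant $T$, exactly as in the proof of Theorem~\ref{mainthm1}. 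I would then prove the stronger statement that $\eta_T$ is an isomorphism for cofibrant $T$: by the retract argument and naturality of $\eta$ it suffices to treat $T$ a relative cell complex for the generating cofibrations $\{\OFopu(G/H,-)\otimes f\mid H\in\F,\ f\in I\}$, presented by a $\lambda$-sequence $S$ with $S_0$ initial and successor stages obtained as pushouts along maps $\OFopu(G/K,-)\otimes f$ with $K\in\F$, $f\in I$.

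The next step is to check that $\eta$ is an isomorphism on the building blocks. For $K\in\F$ and any object $C$ of $\C$, Lemma~\ref{topfixedpointrepresentable} identifies $i^*(\OFopu(G/K,-)\otimes C)$ with $G/K\otimes C$, and under this identification the composite
\[
\OFopu(G/K,-)\otimes C\xrightarrow{\ \eta\ }i_*i^*\bigl(\OFopu(G/K,-)\otimes C\bigr)\cong i_*(G/K\otimes C),
\]
evaluated at an orbit $G/H$, is the comparison map $(G/K)^H\otimes C\to(G/K\otimes C)^H$, which is an isomorphism by the cellularity condition~\ref{cellular3}; by Remarks~\ref{rem:cellular} and~\ref{rem:mainthm1} it is in fact enough to know this for $C$ a domain or codomain of a generating (acyclic) cofibration. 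In particular $\eta$ is invertible on each $\OFopu(G/K,-)\otimes C$, the object $i_*i^*(\OFopu(G/K,-)\otimes f)$ is isomorphic to $\OFopu(G/K,-)\otimes f$, and, since $i_*$ preserves the initial object, $\eta_{S_0}$ is an isomorphism.

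I would then conclude by transfinite induction along $S$. As $i^*$ is a left adjoint it preserves the colimits of the cell presentation, so $i^*S$ is again a $\lambda$-sequence, starting at the initial object and built from pushouts along the maps $G/K\otimes f$. The functor $i_*$ is computed levelwise by the fixed point functors, and colimits in $\COFop$ are levelwise; the cellularity condition~\ref{cellular2} shows that each $(-)^H$ preserves the relevant pushouts, and the cellularity condition~\ref{cellular1} shows that each $(-)^H$ preserves the directed colimits occurring at limit stages, \emph{provided} the underlying maps in $\C$ of the partial cell complexes of $i^*S$ are cofibrations. Granting this, $i_*i^*S$ is a $\lambda$-sequence of pushouts along $i_*i^*(\OFopu(G/K,-)\otimes f)\cong\OFopu(G/K,-)\otimes f$; the successor step then follows because the pushout of a diagram on which $\eta$ is an isomorphism is again one, and the limit step because a directed colimit of isomorphisms is one. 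Hence $\eta_{S_\beta}$ is an isomorphism for every $\beta$, and therefore so is $\eta_T$.

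The step I expect to be the main obstacle is the one just flagged: showing that the underlying maps in $\C$ of the cell filtration of $i^*T$ are cofibrations, so that the cellularity condition~\ref{cellular1} is applicable at limit ordinals. These underlying maps are transfinite compositions of pushouts of maps $(G/K)\otimes f$ with $f$ a cofibration in $\C$, so by the pushout-product axiom it suffices that the spaces $G/K$ be cofibrant in $\U$ --- which holds automatically when $G$ is a compact Lie group by Lemma~\ref{compactlie}, and is in any case implicit in the hypothesis that $\COFop$ admits the projective model structure with the stated generating cofibrations (cf.\ Propositions~\ref{projectivemodelstructure} and~\ref{projectiveorbit}). Apart from this point the argument is formally identical to that of Theorem~\ref{mainthm1}.
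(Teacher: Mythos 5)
Your argument is essentially the paper's own: the published proof consists of the single remark that the unit is an isomorphism on cofibrant objects ``as in the proof of Theorem~\ref{mainthm1}'' and that $f$ is a weak equivalence iff $i_*(f)$ is, so your write-up is just that proof spelled out, and it is correct. The one point where you go beyond the paper --- and where you should be careful --- is the flagged issue about the underlying maps of the cell filtration being cofibrations in $\C$ so that the cellularity condition~\ref{cellular1} applies at limit ordinals. Your resolution via cofibrancy of $G/K$ and the pushout-product axiom is right for compact Lie groups (Lemma~\ref{compactlie}), but the parenthetical claim that cofibrancy of $G/K$ is ``implicit in the hypothesis'' that $\COFop$ carries the projective model structure with the stated generating cofibrations is not justified: in the paper's own application of this proposition to a general topological group $G$ and $\C=\UD$, the spaces $G/K$ need not be cofibrant in $\U$, yet the hypotheses of the proposition hold. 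There the induction goes through because the fixed point functors satisfy the cellularity conditions in a stronger form --- by Lemma~\ref{fixedpointfunctortop} they preserve directed colimits of (levelwise) closed inclusions, and $G/K\otimes f$ is always such a map --- so the limit-ordinal step does not actually need the filtration maps to be cofibrations. This is a genuine subtlety that the paper glosses over; your proof is fine provided you either restrict to the compact Lie case or replace the appeal to cofibrancy of $G/K$ by this stronger preservation property in the examples at hand.
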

\begin{proof}
The unit of the adjunction $(i^*,i_*)$ is an isomorphism in cofibrant objects as in the proof of Theorem \ref{mainthm1} and a morphism $f$ between $G$-objects is a weak equivalence if and only if $i_*(f)$ is so.
\end{proof}

Combining Proposition \ref{projectiveorbit} and Proposition \ref{Fmodeltop} with the previous proposition yields our main result in the topological setting.

\begin{thm}
\label{mainthm2}
Suppose that $G$ is a compact Lie group and $\F$ a collection of closed subgroups of $G$ containing the trivial subgroup. Suppose that $\C$ is a topological model category that is cofibrantly generated and that for any $H\in \F$, the $H$-fixed point functor satisfies the cellularity conditions \ref{cellular}. Then there is a Quillen equivalence
\[
i^*\colon\COFop \leftrightarrows \C^G\colon i_*
\]
between the category of contravariant orbit diagrams with the projective model structure and of $G$-objects with the $\F$-model structure.
\end{thm}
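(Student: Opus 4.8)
The plan is to assemble the statement from the three preparatory results of this subsection, so the proof is short. First I would invoke Proposition~\ref{projectiveorbit} to equip $\COFop$ with the projective model structure; this is legitimate because $G$ is a compact Lie group, so each morphism space $\OFopu(G/H,G/K)\cong (G/H)^K$ is cofibrant in $\U$ by Lemma~\ref{compactlie}, which is precisely hypothesis~\ref{projective_assumption1} of Proposition~\ref{projectivemodelstructure}. I would then observe that the transfer argument underlying Proposition~\ref{projectivemodelstructure} (along the evaluation functors $\ev_d$) produces an \emph{explicit} set of generating cofibrations for $\COFop$, namely
\[
\{\OFopu(G/H,-)\otimes f\mid H\in\F,\ f\in I\},
\]
where $I$ is any set of generating cofibrations of $\C$. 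Second, since each $H$-fixed point functor with $H\in\F$ satisfies the cellularity conditions~\ref{cellular} by hypothesis, Proposition~\ref{Fmodeltop} in situation~\ref{Fmodel_assumption1} shows that $\C^G$ admits the $\F$-model structure and is again a topological model category.

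With both model structures in place, Lemma~\ref{quillenpairtop} supplies the Quillen pair $i^*\colon\COFop\leftrightarrows\C^G\colon i_*$ associated with the functor $i\colon G\to\OFop$. To upgrade it to a Quillen equivalence I would appeal to Proposition~\ref{quillenequivalencetop}, whose three hypotheses are now all met: the two model structures exist, $\COFop$ is cofibrantly generated with generating cofibrations of the displayed shape, and the fixed point functors satisfy the cellularity conditions. This yields the desired Quillen equivalence $i^*\colon\COFop\leftrightarrows\C^G\colon i_*$.

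Since at the level of the theorem the argument is purely a matter of citing these results, there is no genuine obstacle remaining; the substantive work, already carried out inside Proposition~\ref{quillenequivalencetop} and mirroring the proof of Theorem~\ref{mainthm1}, is to show that the unit $\eta_T\colon T\to i_*i^*T$ is an isomorphism whenever $T$ is cofibrant. One reduces to the case of an $I_{\OF}$-cell complex, uses the isomorphism $i^*(\OFopu(G/K,-)\otimes C)\cong G/K\otimes C$ coming from Lemma~\ref{topfixedpointrepresentable} so that the unit on such a generator evaluated at $G/H$ becomes the comparison map $(G/K)^H\otimes C\to (G/K\otimes C)^H$, which is an isomorphism by cellularity condition~\ref{cellular3}, and finally propagates this isomorphism through the transfinite composition of pushouts building the cell complex by cellularity conditions~\ref{cellular1} and~\ref{cellular2}. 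Together with the fact that a morphism of $G$-objects is a weak equivalence exactly when each fixed point functor (equivalently, $i_*$ at each level) takes it to a weak equivalence, this shows the Quillen pair is a Quillen equivalence.
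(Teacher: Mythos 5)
Your proposal is correct and follows exactly the paper's route: the theorem is obtained by combining Proposition~\ref{projectiveorbit}, Proposition~\ref{Fmodeltop} in situation~\ref{Fmodel_assumption1}, and Proposition~\ref{quillenequivalencetop}, with the generating cofibrations of $\COFop$ of the required form supplied by the transfer argument in Proposition~\ref{projectivemodelstructure}. Your closing summary of the unit argument inside Proposition~\ref{quillenequivalencetop} also matches the paper's proof of Theorem~\ref{mainthm1}.
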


\subsection{Example: Topological diagrams}
\label{sec:topologicaldiagrams}
Let $\D$ be a small, topological category. Equip the category of $\U$-functors $\UD$ with the cofibrantly generated, topological, projective model structure obtained in Proposition \ref{projectivemodelstructure}. Fix a topological group $G$ in $\U$ and a collection $\mathcal F$ of closed subgroups of $G$ containing the trivial subgroup.

The following lemma is known and checked by hand using that any fixed point set of $X$ in $\UG$ is closed. A pointed analogue is \cite[III Lemma~1.6]{mandellmay}.
\begin{lem}
\label{fixedpointfunctortop}
The $H$-fixed point functor $\U^G\to \U$ preserves directed colimits of diagrams where each arrow is a monomorphism, preserves pushouts of diagrams where one leg is a closed embedding as a map in $\U$ and for any $G$-spaces $X$ and $Y$, the induced map $X^H\times Y^H\to (X\times Y)^H$ is an isomorphism in $\U$.
\end{lem}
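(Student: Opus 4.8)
The plan is to reduce everything to the point-set fact singled out in the text: for every weak Hausdorff $G$-space $X$ the fixed point set $X^H$ is \emph{closed} in $X$. Indeed, $X^H=\bigcap_{h\in H}E_h$, where $E_h$ is the equalizer of $h\cdot(-)\colon X\to X$ and $\id_X$, and the equalizer of two continuous maps with weak Hausdorff target is closed, since the diagonal of a weak Hausdorff $k$-space is closed. Consequently $X^H$, being a closed subspace of the $k$-space $X$, is again a weak Hausdorff $k$-space and its $k$-subspace topology coincides with the ordinary subspace topology; this is what will allow me to argue with ordinary subspaces throughout.

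For the assertion about products I would simply observe that $(-)^H\colon\U^G\to\U$ is the composite of the restriction functor $\U^G\to\U^H$ with the conical limit functor $\U^H\to\U$, hence preserves all limits; in particular the canonical comparison map $X^H\times Y^H\to(X\times Y)^H$ is an isomorphism in $\U$. Concretely, under the diagonal action a pair $(x,y)$ is $H$-fixed exactly when $x$ and $y$ are, so the map is a bijection onto the closed subspace $(X\times Y)^H$ of $X\times Y$ and therefore a homeomorphism.

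For the two colimit statements I would argue first on underlying sets and then on topologies. First I would invoke the standard point-set facts that in $\U$ a pushout along a closed embedding, and a directed colimit along monomorphisms, are computed as in $\Top$, the $\Top$-colimit being automatically a weak Hausdorff $k$-space in these cases, so that no $k$-ification or weak Hausdorffification intervenes. Thus the underlying set of $\colim_i X_i$ is $\bigcup_i X_i$ with the colimit topology, and that of a pushout $B\cup_A C$ along a closed embedding $A\hookrightarrow C$ is $B\sqcup(C\setminus A)$ with $B$ closed. Next, if a point $x$ of such a colimit comes from $X_i$, then since $X_i\to\colim_i X_i$ is an injective $G$-map with $G$-invariant image, $hx=x$ holds in the colimit for all $h\in H$ iff it already holds in $X_i$; hence $x\in(\colim_i X_i)^H$ iff $x\in X_i^H$, so the underlying set of $(\colim_i X_i)^H$ is $\bigcup_i X_i^H$. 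The same bookkeeping for the pushout, using $A^H=A\cap C^H$, identifies the underlying set of $(B\cup_A C)^H$ with $B^H\cup_{A^H}C^H$.

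It then remains to check that these set-level identifications are homeomorphisms, which I would do by comparing the subspace topology on $(\colim_i X_i)^H$, resp.\ on $(B\cup_A C)^H$, with the colimit, resp.\ pushout, topology, using once more that each $X_i^H$ is closed in $X_i$ and that $A^H$ is closed in $C^H$, together with the explicit description of the colimit and pushout topology in $\Top$; since the comparison maps are continuous bijections by functoriality, it suffices to see they are open. I expect this last, purely point-set step --- pinning down that the relevant colimits in $\U$ are the naive ones and that forming $H$-fixed points is compatible with passage to the colimit/pushout topology --- to be the only real obstacle; it is exactly what ``checked by hand'' refers to, and a pointed version is recorded in \cite[III Lemma~1.6]{mandellmay}.
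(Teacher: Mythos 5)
The paper offers no proof of this lemma beyond the remark that it is ``known and checked by hand using that any fixed point set of $X$ in $\U^G$ is closed,'' with a pointer to the pointed analogue in Mandell--May, and your proposal carries out exactly that check, organized around the same key closedness fact, so it takes essentially the same approach. It is correct in outline; the one step you explicitly defer --- that the relevant colimits in $\U$ are the naive ones from $\Top$ and that the fixed-point subspaces carry the expected colimit topology --- is precisely the content of ``checked by hand,'' and deserves care in the directed-colimit case since the arrows are only assumed to be monomorphisms (injective continuous maps), not closed inclusions.
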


For $\C=\UD$ and any topological group $G$ we obtain a positive answer to all three questions of the introduction.
\begin{prop} There is a Quillen equivalence between the category of $\U$-functors $\OFop\to \UD$ with the projective model structure and the category of $G$-objects in $\UD$ with the $\mathcal F$-model structure.
\end{prop}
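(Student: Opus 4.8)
The plan is to deduce the statement from Proposition~\ref{quillenequivalencetop} by establishing its hypotheses for $\C=\UD$ and an arbitrary topological group $G$: namely, (i) the fixed point functors $(-)^H\colon\UD^G\to\UD$, $H\in\mathcal F$, satisfy the cellularity conditions~\ref{cellular}; (ii) $\UD^G$ admits the $\mathcal F$-model structure; and (iii) $\UD^{\OFop}$ admits the projective model structure and is cofibrantly generated with generating cofibrations $\{\OFopu(G/H,-)\otimes f\mid H\in\mathcal F,\ f\in I\}$ for a suitable set $I$ of cofibrations of $\UD$. Granted these, Proposition~\ref{quillenequivalencetop} produces the Quillen equivalence $(i^*,i_*)$ of Lemma~\ref{quillenpairtop}. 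The observation organising all three points is that colimits, the tensor, and the fixed point functor on $\UD$ are computed levelwise over $\D$---so that $(X^H)(d)$ is the $H$-fixed point space of the $G$-space $X(d)$---whence each claim reduces to a statement about $(-)^H\colon\U^G\to\U$ furnished by Lemma~\ref{fixedpointfunctortop}; and because the ground category is $\U$, the maps occurring are levelwise \emph{closed inclusions}, which is exactly what permits $G$ to be a general topological group rather than a compact Lie group.

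For (i), one first records that the generating cofibrations of $\UD$ have the form $\Du(c,-)\otimes\iota$ with $\iota$ a generating cofibration of $\U$, hence are levelwise closed inclusions; in particular every cofibration of $\UD$ is a monomorphism, and the underlying map of any relative $I_{\mathcal F}$-cell complex in $\UD^G$ is a levelwise closed inclusion. All three cellularity conditions, evaluated at an object $d\in\D$, then follow from Lemma~\ref{fixedpointfunctortop}: condition~\ref{cellular1} from the preservation by $(-)^H\colon\U^G\to\U$ of directed colimits of monomorphisms; condition~\ref{cellular2}, after reducing via Proposition~\ref{prop:cellular2} to a generating cofibration $f=\Du(c,-)\otimes\iota$, from the preservation of pushouts along closed embeddings, the leg $(G/K\otimes f)(d)$ being the product of the space $(G/K)\times\Du(c,d)$ with the closed inclusion $\iota$; and condition~\ref{cellular3} from the fact that limits commute with products, the $G$-action on $(G/K)\otimes A$ being trivial on the $A$-factor, so that $\big((G/K)\times A(d)\big)^H\cong(G/K)^H\times A(d)$ via the canonical map.

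For (ii), $\UD$ is a cofibrantly generated topological model category (Proposition~\ref{projectivemodelstructure}), its cofibrations are monomorphisms, and every object of $\UD$---hence of $\UD^G$---is fibrant. One then transfers the model structure along $\{G/H\otimes -,\,(-)^H\}_{H\in\mathcal F}$ as in the proof of Proposition~\ref{Fmodeltop}(b): the acyclicity hypothesis of Transfer Theorem~\ref{transport} follows from Lemma~\ref{lemtransport2} via the path objects $[I,X]$, and for the small object argument on $I_{\mathcal F}$ and $J_{\mathcal F}$ one uses that $\UD(A,-)$ sends the $\lambda$-sequence underlying a relative $I_{\mathcal F}$-cell complex to a $\lambda$-sequence of monomorphisms of $G$-sets, on which the fixed point functor $\Set^G\to\Set$ commutes with the colimit, together with the smallness of $A$ in $\UD$. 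The single place where compact Lieness would otherwise intervene is precisely that smallness: although $G/H\otimes f$ need not be a cofibration of $\UD$ for a general $G$, it is a levelwise closed inclusion, so the underlying map of any relative $I_{\mathcal F}$- or $J_{\mathcal F}$-cell complex is a levelwise closed inclusion, and the domains of the generating cofibrations of $\UD$ are small with respect to such maps because every space is small with respect to closed inclusions in $\U$. (The pushout-product axiom, making $\UD^G$ a topological model category, is then verified through cotensors as in Proposition~\ref{projectivemodelstructure}.)

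For (iii), the exponential law for enriched functor categories \cite{kelly} provides an isomorphism of topological categories $\UD^{\OFop}\cong\U^{\OFop\times\D}$, where $\OFop\times\D$ is the small topological category with objects the pairs $(G/H,c)$ and hom-spaces $\OFopu(G/H,G/K)\times\Du(c,c')$. The category $\U^{\OFop\times\D}$ carries the projective model structure by Proposition~\ref{projectivemodelstructure}(b), and under the isomorphism this is precisely the levelwise projective model structure on $\UD^{\OFop}$; in particular the latter exists. With $I$ taken to be the generating cofibrations $\{\Du(c,-)\otimes\iota\}$ of $\UD$, the set $\{\OFopu(G/H,-)\otimes f\mid H\in\mathcal F,\ f\in I\}$ is, level by level, exactly the standard set $\{\Map_{\OFop\times\D}((G/H,c),-)\otimes\iota\}$ of generating cofibrations of $\U^{\OFop\times\D}$, so the cofibrant-generation hypothesis of Proposition~\ref{quillenequivalencetop} is satisfied. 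Assembling (i)--(iii), Proposition~\ref{quillenequivalencetop} delivers the Quillen equivalence. I expect the main obstacle to be exactly the point already flagged: for an arbitrary topological group $G$ neither $G/H$ nor $(G/H)^K$ is cofibrant in $\U$, so that Propositions~\ref{Fmodeltop} and~\ref{projectivemodelstructure}(a) cannot be invoked directly; the way around it is to replace ``cofibration'' systematically by ``(levelwise) closed inclusion'' in the transfer and small-object arguments and to appeal to Lemma~\ref{fixedpointfunctortop}, which is precisely the compatibility of $(-)^H\colon\U^G\to\U$ with these closed-inclusion cellular constructions, valid for every $G$.
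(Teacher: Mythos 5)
Your proposal is correct and follows essentially the same route as the paper: verify the cellularity conditions levelwise via Lemma~\ref{fixedpointfunctortop}, obtain the $\mathcal F$-model structure on $\UD^G$ by the transfer argument of Proposition~\ref{Fmodeltop} in situation~\ref{Fmodel_assumption2} with cofibrations replaced by levelwise closed inclusions in the smallness argument, identify $\UD^{\OFop}$ with $\U^{\OFop\times\D}$ to get the projective model structure from Proposition~\ref{projectivemodelstructure}, and conclude by Proposition~\ref{quillenequivalencetop}. Your write-up is simply a more detailed version of the paper's proof, and you correctly isolate the one point where compact Lieness is circumvented for general $G$.
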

\begin{proof}
Note that the $H$-fixed point functors for $H\in \mathcal F$ satisfy the cellularity conditions by Lemma~\ref{fixedpointfunctortop}. If $G$ is a compact Lie group, we are done by Theorem \ref{mainthm2}. For general $G$, we apply Proposition~\ref{quillenequivalencetop}. 

Note that the category of $G$-objects in $\UD$ admits the $\mathcal{F}$- model structure. This can be proved for instance as Proposition~\ref{Fmodeltop} in the situation \ref{Fmodel_assumption2} with the following adaption regarding the small object arguments for $I_{\mathcal F}$ and $J_{\mathcal F}$. For general $G$, we can not conclude that the underlying map of a relative $I_{\mathcal F}$-cell complex is a cofibration in $\UD$. Nevertheless, it is a levelwise closed inclusion and every object of $\UD$ is small with respect to the levelwise inclusions. Thus, for a domain $A$ of a generating cofibration of $\UD$, the cardinal $\kappa$ shall be chosen such that $A$ is $\kappa$-small with respect to the levelwise inclusions and similarly for $J_{\mathcal F}$. 

Write $\C=\UD$. Identifying the category of orbit diagrams $\COFop$ with the category of $\U$-functors ${\U}^{\OFop\times \D}$, Proposition~\ref{projectivemodelstructure} implies that $\COFop$ admits the projective model structure with generating cofibrations as desired in Proposition~\ref{quillenequivalencetop}.
\end{proof}

Write $\mathcal{O}_G$ for the orbit category of the collection $\mathcal F$ of all closed subgroups of $G$. Taking $\D$ to be the category with only one arrow, we recover in particular Elmendorf's Theorem~(\cite[VI.~ 6.3]{mayequi}, \cite[V.~ 3.2]{mayequi},\cite{elmendorf}) stating that the homotopy categories $\Ho({\U}^{{\mathcal{O}}_G^\op})$ and $\Ho(\U^G)$ are equivalent.
\begin{cor}[{Piacenza (\cite[VI.~ \S6]{mayequi},\cite{piacenza})}]
There is a Quillen equivalence 
\[{\U}^{{\mathcal{O}}_G^\op}\leftrightarrows \U^G\]
 between the category of topological presheaves indexed by the orbit category $\mathcal{O}_G$ and the category of $G$-spaces.
\end{cor}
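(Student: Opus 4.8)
The plan is to derive the corollary as a direct application of the proposition immediately preceding it, by specializing the diagram category $\D$ appropriately. First I would observe that the trivial subgroup $\{e\}$ is closed in any topological group $G$, so the set $\mathcal F = \mathcal O_G$ of all closed subgroups of $G$ contains the trivial subgroup, and hence the hypotheses of the preceding proposition are in force for this $\mathcal F$.

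Next I would take $\D$ to be the terminal topological category, i.e.\ the category with one object and only its identity morphism. Then the category of $\U$-functors $\UD$ is canonically isomorphic (as a topological model category with the projective model structure) to $\U$ itself, since a functor out of the terminal category is just an object and the projective weak equivalences and fibrations are exactly those of $\U$. Under this identification, the category of $G$-objects in $\UD$ becomes $\U^G$ with the $\mathcal F$-model structure, and the category of $\U$-functors $\OFop \to \UD$ becomes $\U^{\OFop} = \U^{\mathcal O_G^{\op}}$ with the projective model structure. The preceding proposition then yields precisely a Quillen equivalence $\U^{\mathcal O_G^{\op}} \leftrightarrows \U^G$, which is the content of the corollary; the identification of the homotopy categories $\Ho(\U^{\mathcal O_G^{\op}}) \simeq \Ho(\U^G)$ promised in the surrounding text is then the standard consequence that a Quillen equivalence induces an equivalence of homotopy categories.

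I do not anticipate a genuine obstacle here: the only point requiring a word of care is the identification of $\U^{\D}$ with $\U$ when $\D$ is terminal, and correspondingly that the $H$-fixed point functors on $(\U^{\D})^G \cong \U^G$ coincide with the usual fixed-point functors so that Lemma~\ref{fixedpointfunctortop} supplies the cellularity conditions \ref{cellular} directly — but this is immediate from the definitions. Everything else is an instance of the already-proved proposition.
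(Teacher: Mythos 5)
Your proposal is correct and matches the paper's own derivation: the corollary is obtained exactly by taking $\D$ to be the terminal topological category (the category with only one arrow), identifying $\U^{\D}$ with $\U$, and applying the preceding proposition with $\mathcal F$ the set of all closed subgroups of $G$. The only point you flag — that $\{e\}$ is closed and that the identifications of model structures are immediate — is indeed all that needs checking.
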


\begin{appendix}

\section{Transferring model category structures along a set of left adjoints}
\label{sec:transfer}
An important method to equip a category with a model structure is by transfer along a left adjoint from a cofibrantly generated model category, which we generalize here to transfer along a set of adjoints without passing through products of model categories. First, we recall the terminology of cofibrantly generated model categories in reverse order, using terms that will be introduced subsequently.  

A model category $\C$ is \emph{cofibrantly generated} if there exist sets $I$ and $J$ of morphisms of $\C$ that permit the small object argument, such that the acyclic fibrations are the maps that have the right lifting property with respect to $I$ and the fibrations are the maps that have the right lifting property with respect to $J$. The morphisms of $I$ are called \emph{generating cofibrations} and the morphisms of $J$ are called \emph{generating acyclic cofibrations}. A set $K$ of morphisms in a cocomplete category \emph{permits the small object argument} if the domain of every element of $K$ is small relative to the class of relative $K$-cell complexes. A \emph{relative $K$-cell complex} is a transfinite composition of pushouts of elements of $K$, and we denote the class of relative $K$-cell complexes by $K\text{-cell}$. For a definition of smallness relative to a class of morphisms, we refer to \cite[2.1.3]{hovey}. In our cases, this definition agrees with \cite[10.4.1]{hirschhorn}, as we consider only classes of morphisms that form a subcategory and that are closed under transfinite composition. Hovey's \cite{hovey} and Hirschhorn's \cite{hirschhorn} monographs are excellent introductions to the theory of cofibrantly generated model categories.

It is standard to lift a cofibrantly generated model category structure along a left adjoint \cite{crans}. The Transfer Theorem of Kan \cite[11.3.2]{hirschhorn} works more generally for a set of left adjoints.
\begin{thm}
\label{transport}
Let $\mathcal C$ be a cofibrantly generated model category with generating cofibrations $I$ and generating acyclic cofibrations $J$. Let $\mathcal D$ be a complete, cocomplete category. Given a set of adjunctions $\left\{F_\iota\colon {\mathcal C}\rightleftarrows {\mathcal D}\colon U_\iota\right\}_\iota$, write $FI=\bigcup_\iota\left\{F_\iota(f); f\in I\right\}$ and $FJ=\bigcup_\iota\left\{F_\iota(f); f\in J\right\}$. Suppose that
\begin{enumerate}[i)]
\item
\label{transport1}
the sets $FI$ and $FJ$ permit the small object argument and
\item
\label{Utakes}
 for all $\iota$, the functor $U_\iota$ takes relative $FJ$-cell complexes to weak equivalences.
\end{enumerate}
Then there exists a cofibrantly generated model category structure on $\mathcal D$ with generating cofibrations $FI$, generating acyclic cofibrations $FJ$ and with weak equivalences and fibrations the maps of $\mathcal D$ which by every $U_\iota$ are taken to weak equivalences and fibrations in $\mathcal C$, respectively.
\end{thm}

\begin{proof}
We apply the Recognition Theorem~\cite[2.1.19]{hovey}. The class of morphisms that have the right lifting property with respect to a given class of morphisms $K$ is denoted by $K$-inj. Using the numbering of \cite[2.1.19]{hovey}, the following are the nontrivial points to show.
\begin{enumerate}[4.]
\item
\label{FJcelliswe}
Every relative $FJ$-cell complex is a weak equivalence and has the left lifting property with respect to $FI$-inj.
\item [5. \& 6.]
\label{Iinj=WJinj}
Let $p$ be a map in $\D$. Then $p\in FI\text{-inj}$ if and only if $p$ is a weak equivalence and $p\in FJ\text{-inj}$.
\end{enumerate}
Note that for any $\iota$, any map $i$ in $\C$ and any map $p$ in $\D$, the lifting problems
 \begin{equation*}
\xymatrix{
\Fi(A)\ar[d]_{\Fi(i)}\ar[r] & X\ar[d]^p \\
\Fi(B)\ar[r] & Y
}
\quad\text{and}\quad
\xymatrix{
A\ar[d]_{i}\ar[r] & \Ui(X)\ar[d]^{\Ui(p)} \\
B\ar[r] & \Ui(Y)
}
\end{equation*}
are equivalent by adjointness. Condition 5.\ \& 6.\ follows. Indeed, a map $p$ of $\D$ has the right lifting property with respect to $FI$ if and only if every $U_\iota(p)$ is an acyclic fibration in $\C$. On the other hand, the map $p$ has the right lifting property with respect to $FJ$ and is a weak equivalence if and only if every $U_\iota(p)$ is a fibration in $\C$ and is a weak equivalence in $\C$. 

We verify that condition \ref{FJcelliswe}\ holds. Assumption \ref{Utakes} assures that any $FJ$-cell complex is a weak equivalence in $\D$. Moreover, any $FJ$-cell complex has the left lifting property with respect to $FJ\text{-inj}$ and thus with respect to $FI\text{-inj}$, as we have already shown that $FI\text{-inj}\subset FJ\text{-inj}$.
\end{proof}

Mark W.~Johnson already had the idea of lifting model structures along several adjoints. He and Michele Intermont lift model structures separately and then intersect them \cite[Prop.~8.7]{intermontjohnson}.

\begin{rem}
To answer a private question of Emanuele Dotto, the transfer theorem holds also if one works with a set of adjunctions $\left\{F_\iota\colon {\mathcal C_\iota}\rightleftarrows {\mathcal D}\colon U_\iota\right\}_\iota$, where each $C_\iota$ is a cofibrantly generated model category.
\end{rem}

As for the transfer along one left adjoint, the following two lemmas are used to apply the Transfer Theorem in practice. Recall that for an ordinal $\lambda$, a cocomplete category $\C$ and a class of morphisms $\D$ of $\C$, a \emph{$\lambda$-sequence of maps in $\D$} is a non-empty colimit preserving functor $X\colon \lambda \to \C$ such that for every ordinal $\beta$ with successor $\beta+1< \lambda$, the map $X_\beta\to X_{\beta+1}$ is in $\D$.
\begin{lem}
\label{lemtransport1}
Consider a set of adjunctions $\left\{F_\iota\colon {\mathcal C}\rightleftarrows {\mathcal D}\colon U_\iota\right\}_\iota$ and $FI$, $FJ$ as in the Transfer Theorem~\ref{transport}. Suppose that for each $\iota$, the functor $U_\iota$
\begin{enumerate}[a)]
\item
\label{lemtransport1a}
takes relative $FI$-cell complexes to cofibrations,
\item
\label{lemtransport1b}
takes relative $FJ$-cell complexes to acyclic cofibrations,
\item
\label{lemtransport1c}
preserves the colimit of any $\lambda$-sequence of maps in $FI$-cell and
\item
\label{lemtransport1d}
preserves the colimit of any $\lambda$-sequence of maps in $FJ$-cell,
\end{enumerate}
then the conditions \ref{transport1} and \ref{Utakes} of Theorem~\ref{transport} are satisfied. 
\end{lem}
\begin{proof}
Condition \ref{Utakes} of Theorem~\ref{transport} holds, since acyclic cofibrations are in particular weak equivalences.

Concerning condition \ref{transport1}, we show that $FI$ permits the small object argument. Let $A$ be the domain of a map in $I$. We have to prove that for any $\iota$, the object $F_\iota(A)$ is small relative to $FI$-cell. This means that we have to find a cardinal $\kappa$ such that for every regular cardinal $\lambda\geq \kappa$ and $\lambda$-sequence $X$ of maps in $FI$-cell, the induced map
\[\colim_{\beta<\lambda} \D(F_\iota(A),X_\beta)\to \D(F_\iota(A),\colim X)
\] 
is an isomorphism. Equivalently, the induced map
\[
\colim_{\beta<\lambda} \C(A,U_\iota(X_\beta))\to \C(A,U_\iota(\colim X))
\]
has to be an isomorphism. By assumption c) the functor $U_\iota$ preserves the colimit of $X$ and by c) and a) the composite $U_\iota\circ X\colon\lambda \to \C$ is a $\lambda$-sequence of cofibrations in $\C$. Now, the existence of the desired cardinal $\kappa$ follows from the fact that the domains of generating cofibrations in a cofibrantly generated model category are small with respect to the cofibrations \cite[2.1.16]{hovey}.

Similarly, the set $FJ$ permits the small object argument.
\end{proof}

Adapting Quillen's path object argument \cite[II p.~ 4.9]{quillen} as in \cite[Remark A.4]{schwedeshipley}, \cite[2.6]{bergermoerdijk} yields the following result, which is particularly useful in topological situations.
\begin{lem}
\label{lemtransport2}
Condition \ref{Utakes} of the Transfer Theorem~\ref{transport} holds if $\D$ has path-objects for fibrant objects and a fibrant replacement functor.
\end{lem}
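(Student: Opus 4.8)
The statement to prove is Lemma~\ref{lemtransport2}: condition~\ref{Utakes} of the Transfer Theorem holds if $\D$ has path-objects for fibrant objects and a fibrant replacement functor. This is the classical ``path object argument'' of Quillen, and I need to verify that a relative $FJ$-cell complex $u\colon X\to Y$ is taken by each $U_\iota$ to a weak equivalence in $\C$.

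\textbf{Plan.} The plan is to show that any relative $FJ$-cell complex $u\colon X\to Y$ in $\D$ has the left lifting property with respect to all fibrations in $\D$; once that is known, a standard retract argument produces, for each $\iota$, a left inverse $r$ to $U_\iota(u)$ together with a fiberwise (over $U_\iota(Y)$) homotopy showing $U_\iota(u)\circ r$ is homotopic to the identity, and then the two-out-of-three and homotopy invariance properties of weak equivalences in the model category $\C$ force $U_\iota(u)$ to be a weak equivalence.

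\textbf{Key steps.} First I would observe that each element of $FJ$, being of the form $F_\iota(j)$ for $j\in J$ a generating acyclic cofibration of $\C$, has the left lifting property with respect to every map $p$ in $\D$ such that $U_\iota(p)$ is a fibration in $\C$ — this is exactly the adjointness equivalence of lifting problems already spelled out in the proof of Theorem~\ref{transport}. In particular every element of $FJ$, hence every relative $FJ$-cell complex, has the left lifting property with respect to the class of fibrations of $\D$ (maps sent by every $U_\iota$ to a fibration). Fix $\iota$ and a relative $FJ$-cell complex $u\colon X\to Y$. Using the fibrant replacement functor, choose a fibrant replacement $Y\to \hat Y$ with $\hat Y$ fibrant and $\hat Y\to *$ a fibration; composing, $X\to \hat Y$ factors an acyclic cofibration-type map, and more to the point one gets a commutative square with $u$ on the left and the fibration $\hat Y\to *$ (or rather the fibrant replacement fibration) on the right, yielding a lift $q\colon Y\to \hat X$ where $\hat X$ is a fibrant replacement of $X$; actually the cleanest route is: form fibrant replacements $i_X\colon X\to \hat X$ and use that $\hat X$ fibrant plus $u$ having LLP against the fibration $\hat X\to *$ gives $r\colon Y\to \hat X$ with $r u = i_X$. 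Then, using a path object $P\hat X$ for the fibrant object $\hat X$ (which exists by hypothesis), solve the lifting problem with $u$ on the left and the fibration $P\hat X\to \hat X\times\hat X$ on the right, the top map being $(i_X\, s_{\text{comp}}, \ldots)$ constructed from $i_X$ and $r u$; this produces a homotopy $i_X \simeq r u$ relative to nothing, but arranged so that it is a homotopy over the relevant data. Applying $U_\iota$ throughout and using that $U_\iota$ preserves these finite limits involved (path objects are built from products and the structure is preserved because $U_\iota$ is a right adjoint), one deduces that $U_\iota(i_X)$ and $U_\iota(r)\,U_\iota(u)$ are homotopic in $\C$. Since $U_\iota(i_X)$ is a weak equivalence (it is the image of a fibrant replacement — wait, this requires care, see below), two-out-of-three gives that $U_\iota(u)$ is a weak equivalence.

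\textbf{Main obstacle.} The delicate point — and the place where the precise formulation of the path object argument of Schwede--Shipley and Berger--Moerdijk matters — is that one does \emph{not} a priori know $U_\iota$ sends the fibrant replacement $i_X\colon X\to\hat X$ to a weak equivalence (that would be circular: fibrant replacements are themselves acyclic cofibrations, built from $J$-cells, which is what we are trying to control). The correct argument avoids this: one works with a \emph{fibrant replacement in $\D$}, shows directly that the relative $FJ$-cell complex $u$ becomes, after the retract-and-homotopy manipulation, a map admitting a homotopy inverse over $Y$ before applying $U_\iota$, so that $U_\iota(u)$ is a homotopy equivalence between objects of $\C$ that are — again by the path object argument applied one more level down — identified up to homotopy; the cleanest statement is that $u$ is a \emph{strong deformation retract} in $\D$ in a sense preserved by $U_\iota$, hence $U_\iota(u)$ is a homotopy equivalence in $\C$ and in particular a weak equivalence. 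I would carry this out by the standard three-line diagram chase (lift against $\hat X\to *$ to split $u$ up to the replacement, then lift against $P\hat X\to\hat X\times\hat X$ to get the homotopy), being careful to phrase the homotopy so that $U_\iota$, as a right adjoint preserving the finite products and the chosen path object, transports it to an honest left homotopy in $\C$ exhibiting $U_\iota(u)$ as a homotopy equivalence; then weak equivalence follows since homotopy equivalences are weak equivalences. This is precisely the adaptation referenced as \cite[Remark A.4]{schwedeshipley}, \cite[2.6]{bergermoerdijk}, and the only real work is bookkeeping the fibrant replacement functor so that every object in sight is fibrant where a path object is needed.
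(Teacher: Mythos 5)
Your overall strategy is the right one --- this is Quillen's path object argument, as in the paper --- but the execution has two genuine problems. First, the concluding step does not work as stated: from $r\circ u = i_X$ with $U_\iota(i_X)$ a weak equivalence, the two-out-of-three property does \emph{not} let you conclude that $U_\iota(u)$ is a weak equivalence (a factorization of a weak equivalence says nothing about either factor). What is needed is the two-out-of-six property applied to the three composable maps $U_\iota X\to U_\iota Y\to U_\iota\hat X\to U_\iota\hat Y$, for which you must also produce a \emph{second} weak equivalence, namely the composite $U_\iota(\hat u)\circ U_\iota(r)$ where $\hat u\colon\hat X\to\hat Y$ is induced by functoriality of the replacement. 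This is where the path object actually enters, and your homotopy is aimed at the wrong target: you propose to lift against $\Path(\hat X)\to\hat X\times\hat X$ to produce a homotopy $i_X\simeq r u$ --- but $ru=i_X$ on the nose by construction, so that homotopy is vacuous and proves nothing. The correct lifting problem has $u$ on the left, $\Path(\hat Y)\to\hat Y\times\hat Y$ on the right, top map $X\to Y\to\hat Y\to\Path(\hat Y)$ and bottom map $(i_Y,\hat u\circ r)$; the resulting right homotopy shows $U_\iota(\hat u)\circ U_\iota(r)$ is a weak equivalence (by two-out-of-three applied twice, using that both projections off $\Path(\hat Y)$ are weak equivalences), and then two-out-of-six finishes.

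Second, the ``main obstacle'' you identify is a misreading of the hypothesis, and your proposed workaround does not close the gap. A fibrant replacement functor here means, by definition, a functor $R$ with a natural map $X\to RX$ that is a \emph{weak equivalence} in the transferred sense, i.e.\ sent by every $U_\iota$ to a weak equivalence in $\C$; this is an assumption to be verified in applications (typically $R=\mathrm{id}$ because every object is fibrant), not something constructed from $FJ$-cells, so there is no circularity in using that $U_\iota(i_X)$ is a weak equivalence --- indeed the paper's proof uses exactly this. Conversely, your replacement strategy of exhibiting $u$ as a strong deformation retract in $\D$ cannot be carried out: a homotopy inverse would require a map $Y\to X$, and the lifting argument only ever produces a map $Y\to\hat X$ into the fibrant replacement, since $X$ itself need not be fibrant. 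So the detour you take to avoid a non-problem leads to a step that cannot be completed, while the step you flag as suspect is precisely the one the hypothesis is there to license.
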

\begin{proof}
A path-object of an object $X$ of $\D$ is a factorization $X\to \Path(X)\to X\times X$ of the diagonal map into a weak equivalence followed by a fibration in $\D$. Let $X\to RX$ denote the weak equivalence in $\D$ obtained by applying the fibrant replacement functor to $X$. 

Let $i\colon A\to B$ be a relative $FJ$-cell complex. Thus $i$ has the left lifting property with respect to $FJ\text{-inj}$, the class of maps in $\D$ that have the right lifting property with respect to $FJ$. In particular, there exists a lift $r\colon B\to RA$ in
\[
\xymatrix{
A\ar[d]_{i}\ar[r] & RA \ar[d] \\
B\ar[r] & \ast
}.
\]
We will show that for any $\iota$, the composite $U_\iota(Ri)\circ U_\iota(r)$ is a weak equivalence. Then, as $U_\iota A\to U_\iota RA$ is a weak equivalence by assumption, the $2$-out-of-$6$ property \cite[9.3]{dhks} applied to
\[
U_\iota A\stackrel{U_\iota(i)}{\rightarrow} U_\iota B \stackrel{U_\iota(r)}{\rightarrow} U_\iota RA \stackrel{U_\iota(Ri)}{\rightarrow}U_\iota(RB)  
\]
implies that $U_\iota(i)$ is a weak equivalence as desired.

Choose a lift $H$ in 
\[
\xymatrix{
A\ar[r]^i\ar[d]_i & B\ar[r] & RB \ar[r] & \Path(RB)\ar[d] \\
B\ar[rr]^{((B\to RB),r)} & &RB\times RA \ar[r]^{\id\times Ri} & RB\times RB
}.
\]
Now, $U_\iota (H)$ is a right homotopy between the weak equivalence $U_\iota B\to U_\iota(RB)$ and the composite $U_\iota(Ri)\circ U_\iota(r)$. Thus the latter map is a weak equivalence as well by the $2$-out-of-$3$ property of weak equivalences applied twice. 
 
\end{proof}

\end{appendix}

\providecommand{\bysame}{\leavevmode\hbox to3em{\hrulefill}\thinspace}
\providecommand{\MR}{\relax\ifhmode\unskip\space\fi MR }
% \MRhref is called by the amsart/book/proc definition of \MR.
\providecommand{\MRhref}[2]{%
  \href{http://www.ams.org/mathscinet-getitem?mr=#1}{#2}
}
\providecommand{\href}[2]{#2}

\end{document}